\newtheorem{theorem}{Theorem}[section]
\newtheorem*{theorem*}{Theorem}
\newtheorem{lemma}[theorem]{Lemma}
\newtheorem{corollary}[theorem]{Corollary}
\newtheorem{proposition}[theorem]{Proposition}
\newtheorem{definition}[theorem]{Definition}
\newtheorem{conjecture}{Conjecture}
\newtheorem{example}[theorem]{Example}
\newtheorem{remark}[theorem]{Remark}
\newtheorem{claim}{Claim}[theorem]
\newcommand{\R}{\mathbb{R}}
\newcommand{\C}{\mathbb{C}}
\newcommand{\N}{\mathbb{N}}
\begin{document}

\title[On the Fukui-Kurdyka-Paunescu
Conjecture]
{On the Fukui-Kurdyka-Paunescu
Conjecture}

\author[A. Fernandes]{Alexandre Fernandes}
\author[Z. Jelonek]{Zbigniew Jelonek}
\author[J. E. Sampaio]{Jos\'e Edson Sampaio}

\address[Alexandre Fernandes and Jos\'e Edson Sampaio]{    
              Departamento de Matem\'atica, Universidade Federal do Cear\'a,
	      Rua Campus do Pici, s/n, Bloco 914, Pici, 60440-900, 
	      Fortaleza-CE, Brazil. \newline  
              E-mail: {\tt alex@mat.ufc.br}\newline  
              E-mail: {\tt edsonsampaio@mat.ufc.br}
}
\address[Zbigniew Jelonek]{ Instytut Matematyczny, Polska Akademia Nauk, \'Sniadeckich 8, 00-656 Warszawa, Poland. \newline  
              E-mail: {\tt najelone@cyf-kr.edu.pl}
}

\keywords{Zariski's Multiplicity Conjecture, Fukui-Kurdyka-Paunescu's Conjecture, Arc-analytic, Bi-Lipschitz homeomorphism, Degree, Multiplicity}
\subjclass[2010]{14B05, 32S50, 58K30 (Primary) 58K20 (Secondary)}
\thanks{The first named author was partially supported by CNPq-Brazil grant 304221/2017-1. The second named author is partially supported by the grant of Narodowe Centrum Nauki number 2019/33/B/ST1/00755. The third named author was partially supported by CNPq-Brazil grant 303811/2018-8.
}
\begin{abstract}
In this paper, we prove Fukui-Kurdyka-Paunescu's Conjecture, which says that subanalytic arc-analytic bi-Lipschitz homeomorphisms preserve the multiplicities of  real analytic sets. We also prove  several other results on the invariance of the multiplicity (resp. degree) of real and complex analytic (resp. algebraic) sets.  For instance, still in the real case, we prove a global version of Fukui-Kurdyka-Paunescu's Conjecture.
In the complex case, one of the results that we prove is the following: If $(X,0)\subset (\mathbb{C}^n,0), (Y,0)\subset (\mathbb{C}^m,0)$ are germs of analytic sets and $h\colon (X,0)\to (Y,0)$ is a semi-bi-Lipschitz homeomorphism whose graph is a complex analytic set, then the germs $(X,0)$ and $(Y,0)$ have the same multiplicity. One of the results that we prove in the global case is the following: If $X\subset \mathbb{C}^n, Y\subset \mathbb{C}^m$ are algebraic sets and 
$\phi\colon X\to Y$ is a semialgebraic semi-bi-Lipschitz homeomorphism such that the closure of its graph in $\mathbb{P}^{n+m}(\mathbb{C})$ is an orientable homological cycle, then ${\rm deg}(X)={\rm deg}(Y)$.
\end{abstract}

\maketitle

\section{Introduction}

Zariski's famous  Multiplicity Conjecture, stated by Zariski in 1971 (see \cite{Zariski:1971}), is formulated as follows: 
\begin{conjecture}[Zariski's Multiplicity Conjecture]\label{zariski}
Let $f,g \colon(\C^n,0)\to (\C,0)$ be two reduced complex analytic functions. If there is a homeomorphism $\varphi\colon(\C^n,V(f),0)\to (\C^n,V(g),0)$, then $m(V(f),0)=m(V(g),0)$.
\end{conjecture}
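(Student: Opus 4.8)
The plan is to prove that the number $m(V(f),0)$ — which, since $f$ is reduced, equals the order of vanishing $\mathrm{ord}_0 f$ and hence the degree of the projectivized tangent cone $\mathbb{P}C(V(f),0)\subset\mathbb{P}^{n-1}$ — is recoverable from the topology of the pair $(\C^n,V(f),0)$ alone, so that the ambient homeomorphism $\varphi$ forces $m(V(f),0)=m(V(g),0)$. First I would pass from the germ to its link: for all small $\epsilon>0$ the local conical structure lets one replace the germ by the pair $(S^{2n-1}_\epsilon,K_f)$, where $K_f=V(f)\cap S^{2n-1}_\epsilon$, and $\varphi$ then yields (after the standard conicity and shrinking arguments) a homeomorphism of pairs $(S^{2n-1}_\epsilon,K_f)\to(S^{2n-1}_{\epsilon'},K_g)$. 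The whole problem is thereby reduced to extracting the single integer $m$ from this purely topological datum.

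Second, I would try to write down a \emph{topological formula} for $m$. The natural candidates are the invariants of the Milnor fibration $f/|f|\colon S^{2n-1}_\epsilon\setminus K_f\to S^1$ — its geometric monodromy, the characteristic polynomial and the zeta function of the monodromy — together with the Euler characteristics of generic linear sections, i.e.\ the entries of the Teissier $\mu^{*}$-sequence, which are tied to $m$ through Pl\"ucker-type formulas. If each of these entries could be shown to be an invariant of the topological type of the pair, then $m$ would be determined and the conjecture would follow.

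Third, the device I would use to access these section invariants topologically is a \emph{slicing/induction} on $n$. One takes a generic hyperplane $H$, so that $V(f)\cap H$ is again reduced with multiplicity $m$, and attempts to arrange that $\varphi$ carries a transverse slicing of the source pair to a transverse slicing of the target pair; combined with the classical fact that for plane curves ($n=2$) the multiplicity \emph{is} determined by the embedded topological type of the link — via the Puiseux pairs and the semigroup of the branch — this would close the induction, the base cases $n=1,2$ being known.

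Fourth, and this is exactly where I expect the genuine obstacle to lie, the multiplicity is \emph{not} a priori a topological invariant: the tangent cone is destroyed by an arbitrary homeomorphism, and a topological equivalence can send a transverse hyperplane section to a non-transverse one, corrupting the very section-multiplicities the induction depends on; moreover, in dimension $n\ge 3$ the Milnor monodromy data are known to be too coarse to pin down $m$ by themselves, so the slicing step and the ``$\mu^{*}$ is topological'' step both fail without additional input. This is precisely the difficulty that has kept the conjecture open since 1971, and I do not expect a purely topological formula for the degree of the tangent cone to be within reach of present techniques; the hard part is genuinely to control an analytic invariant from topology alone, and any complete solution seems to require either a new topological invariant computing $\deg\mathbb{P}C(V(f),0)$ directly, or geometric control of $\varphi$ beyond mere continuity.
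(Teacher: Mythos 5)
The statement you were assigned is not a theorem of this paper: it is Conjecture~\ref{zariski} (Zariski, 1971), which the paper explicitly records as ``still an open problem,'' referring to Eyral's survey. So there is no proof in the paper to compare against, and your text is not a proof either --- as you yourself concede in your fourth paragraph. Your diagnosis of where the program dies is accurate: after reducing to the link pair $(S^{2n-1}_\epsilon, K_f)$, the multiplicity is not known to be readable from this topological datum; a homeomorphism of pairs does not respect transversality of hyperplane sections, so the slicing induction cannot be transported through $\varphi$; and the step ``the Teissier $\mu^{*}$-sequence is a topological invariant'' is actually \emph{false} as stated --- the Brian\c{c}on--Speder examples exhibit $\mu$-constant (hence, by L\^e--Ramanujam for those dimensions, topologically trivial) families in which $\mu^{*}$ jumps --- so that leg of your plan is not merely unproven but broken. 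The base cases you invoke are indeed the known ones: $n=2$ via the embedded topology of plane curve links, and multiplicity $1$ in general (topological, and a fortiori Lipschitz, regularity results such as \cite[Theorem 4.2]{Sampaio:2016} cited in this paper). Your honest conclusion --- that a purely topological formula for $\deg \mathbb{P}C(V(f),0)$ is out of reach of present techniques --- matches the paper's own framing of the conjecture as open.

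It is worth noting that what the paper actually proves is exactly the alternative you name in your final sentence: progress comes from imposing geometric control on $\varphi$ beyond continuity. The paper's positive results assume the homeomorphism is semi-bi-Lipschitz with complex analytic graph (Theorem~\ref{inv_mult}), or semialgebraic with projective closure of the graph an orientable homological cycle (Theorem~\ref{inv_degree_homo_cycle}), or, in the real Fukui--Kurdyka--Paunescu setting, subanalytic, arc-analytic and bi-Lipschitz (Theorem~\ref{proof_FKP-conjecture}); under such hypotheses the tangent cone and pseudo-derivative survive the equivalence, and multiplicity or degree is computed by covering-degree and intersection arguments on the graph. So the verdict on your proposal is: no error of reasoning, but no proof --- the gap is the conjecture itself, and you correctly located both the obstruction (multiplicity is not a priori topological) and the kind of additional rigidity under which the paper's methods succeed.
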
 
This is still an open problem; see \cite{Eyral:2007} for a survey on this conjecture. 
% , for example, Zariski in \cite{Zariski:1932} gave a positive answer when $n=2$ and 
In the real case, of course, Zariski's Multiplicity Conjecture does not hold in the same form as in the complex case. However, we have the following conjecture, stated by Fukui, Kurdyka and Paunescu  \cite[Conjecture 3.3]{FukuiKP:2004}:

\begin{conjecture}[Fukui-Kurdyka-Paunescu's Conjecture] Let $X,Y\subset\R^n$ be two germs at the origin of irreducible real analytic subsets. If $h\colon(\R^n,0)\to (\R^n,0)$ is the germ of a subanalytic, arc-analytic and bi-Lipschitz homeomorphism such that $h(X)=Y$, then $m(X,0)\equiv m(Y,0) \,{\rm mod\,} 2$.
\end{conjecture}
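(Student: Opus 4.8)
The plan is to transfer the whole question from the germs $X,Y$ to their tangent cones and to rewrite the multiplicity modulo $2$ as an honestly real intersection number, which can then be controlled by the map that $h$ induces on tangent cones. Write $d=\dim X=\dim Y$ (the dimensions agree since $h$ is bi-Lipschitz). I would use the standard convention that the multiplicity of a real analytic germ is the multiplicity of its complexification, $m(X,0)=m(X_{\C},0)$, together with the facts that the multiplicity of a complex germ equals the degree of its tangent cone and that $C(X_{\C},0)=C(X,0)_{\C}$. These give $m(X,0)\equiv m(C(X,0),0)\pmod 2$, so it is enough to treat the homogeneous case where $X$ and $Y$ are real cones.

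The key reformulation is the following. A generic complex affine $(n-d)$-plane meets $X_{\C}$ transversally in $m(X,0)$ points; taking this plane to be the complexification of a generic \emph{real} $(n-d)$-plane $L\subset\R^n$, complex conjugation acts on the finite intersection, fixing exactly the real points $X\cap L$ and permuting the remaining points in conjugate pairs. Hence $m(X,0)\equiv \#(X\cap L)\pmod 2$: the multiplicity modulo $2$ is the number, modulo $2$, of real intersection points of $X$ with a generic real plane of complementary dimension, i.e. the $\Z/2$ Borel--Moore intersection number of $C(X,0)$ with a generic real linear space.

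Next I would produce the induced map on tangent cones. Here the hypotheses on $h$ are used decisively: since $h$ is arc-analytic and Lipschitz, the tangent map $d_0h(v)=\lim_{t\to 0^{+}}h(tv)/t$ is well defined along arcs, and I would show it is a positively homogeneous, subanalytic, arc-analytic bi-Lipschitz homeomorphism carrying $C(X,0)$ onto $C(Y,0)$. This refines Sampaio's theorem (that bi-Lipschitz homeomorphisms preserve tangent cones as bi-Lipschitz germs) by exhibiting a canonical homogeneous representative. Restricting to the links $\ell_X=C(X,0)\cap S^{n-1}$ and $\ell_Y=C(Y,0)\cap S^{n-1}$ gives a subanalytic bi-Lipschitz homeomorphism of compact $(d-1)$-dimensional subanalytic sets, which therefore matches their $\Z/2$ fundamental classes. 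The final step is to deduce that the two cones then have equal $\Z/2$ intersection numbers against generic complementary real planes, which by the reformulation above is exactly $m(X,0)\equiv m(Y,0)\pmod 2$.

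The main obstacle is precisely this last matching. A real bi-Lipschitz homeomorphism by itself does \emph{not} preserve multiplicity --- multiplicity is not a bi-Lipschitz invariant, and the tangent cones are a priori only abstractly bi-Lipschitz equivalent --- so the mod-$2$ intersection number, which depends on the embedding into $\R^n$ and not merely on the abstract link, cannot be transported for free. The arc-analytic and subanalytic hypotheses must be invoked a second time to guarantee that $d_0h$ is regular enough (in particular that its graph is a subanalytic $\Z/2$-cycle of the correct dimension) for a degree argument to show that it has mod-$2$ degree one and respects generic linear sections. Carrying out this degree computation, and checking its compatibility with the genericity required in the reformulation, is the technical heart of the proof.
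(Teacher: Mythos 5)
Your overall strategy---pass to tangent cones, rewrite multiplicity mod $2$ as a real point count against generic planes, transport via $d_0h$, and finish with a $\Z/2$ degree argument---is the same philosophy as the paper's, but your reduction step is false as stated. The claims $m(X,0)\equiv m(C(X,0),0)\ {\rm mod\,}2$ and ``$\#(X\cap L)\ {\rm mod\,}2$ equals the set-theoretic $\Z/2$ intersection number of $C(X,0)$ with a generic real plane'' both fail: take $X=\{y^2=x^4\}\subset\R^2$, the union of the two parabolas $y=\pm x^2$. Then $m(X,0)=2$, a generic line near the origin meets $X$ in two points, but $C(X,0)=\{y=0\}$ is a line of multiplicity $1$, met by a generic line in one point; so $2\not\equiv 1\ {\rm mod\,}2$. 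The set-theoretic tangent cone forgets exactly the branch data you need: each direction $v$ of the link must be weighted by the number $k_X(v)$ of half-branches of $X$ attached to it, and only directions where $k_X$ is odd may count. This is precisely why the paper works not with $C(X,0)$ but with the \emph{odd part} $C_X'$ (the closure of the components of simple points of $\partial X'$ with $k_X$ odd), for which Proposition \ref{multcone} gives ${\rm deg}_{\pi}(C_X')\equiv m(X,0)\ {\rm mod\,}2$. Relatedly, the ``$\Z/2$ fundamental class of the link'' you invoke need not exist: the link of a real tangent cone is in general not a $\Z/2$ pseudomanifold, and it is Valette's result that $C_X'$ is an Euler cycle which supplies an actual cycle to work with. (In the example above $k_X\equiv 2$, so $C_X'=\emptyset$ and the corrected count is $0\equiv m(X,0)$, as it should be.)

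The second problem is that what you label ``the technical heart''---showing $d_0h$ has mod-$2$ degree one and respects generic linear sections---is exactly the content of the theorem, and you leave it unproved; moreover the ingredient that makes it work is absent from your sketch. The paper proceeds as follows: an argument with Milnor's curve selection shows $C_X'$ is $a$-invariant (antipodally symmetric) and that $\psi=d_0h|_{C'(X,0)}$ is odd, $\psi(-v)=-\psi(v)$; hence $\Gamma={\rm graph}(\psi)$ is an $a$-invariant Euler cycle, whose closure in $\mathbb{P}^{n+m}(\R)$ is therefore a $\Z/2$ homological cycle. The Lipschitz estimate $\|\psi(\beta(t))\|\leq C\|\beta(t)\|$ forces $\tilde\pi^{-1}(0)\cap\Gamma=\{0\}$ for $\tilde\pi(x,y)=\pi(x)$, so projecting $\Gamma$ to the source factor computes ${\rm deg}_2(\Gamma)\equiv m(X,0)$, while swapping factors ($G={\rm graph}(\psi^{-1})=\phi(\Gamma)$) computes ${\rm deg}_2(G)\equiv m(Y,0)$ with ${\rm deg}_2(\Gamma)={\rm deg}_2(G)$. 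Without the $a$-invariance and the Euler-cycle property there is no homological cycle on which your degree argument could run---a subanalytic bi-Lipschitz graph alone carries no $\Z/2$ class. So your proposal needs two repairs: replace the set-theoretic tangent cone by the odd part $C_X'$ throughout, and establish the $a$-invariant Euler-cycle structure on the graph of $d_0h$ before any degree comparison can be made.
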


Several authors approached this conjecture: For example, J.-J. Risler  \cite{Risler:2001} proved that multiplicity $\,{\rm mod\,} 2$ of a real analytic curve is invariant under bi-Lipschitz homeomorphisms; T. Fukui, K. Kurdyka and L. Paunescu  \cite{FukuiKP:2004} confirmed the conjecture  in the case that $X$ and $Y$ are real analytic curves; 
and G. Valette \cite{Valette:2010} showed that  multiplicity $\,{\rm mod\,} 2$ of real analytic hypersurfaces is invariant under arc-analytic bi-Lipschitz homeomorphisms. The third named author of this paper proved in \cite{Sampaio:2020b} the real version of Gau-Lipman's theorem: i.e., multiplicity $\,{\rm mod\,} 2$ of real analytic sets is invariant under homeomorphisms $\varphi\colon (\R^n,0)\to (\R^n,0)$ such that $\varphi$ and $\varphi^{-1}$ have a derivative at the origin. A generalization of this result was presented in \cite{Sampaio:2021b}.

In this paper, we give a complete, positive answer to Fukui-Kurdyka-Paunescu's Conjecture (see Theorem \ref{proof_FKP-conjecture}). A global version of this conjecture is also proved (see Corollary \ref{proof_global_FKP}).

Coming back to the complex case, let us list some contributions to Zariski's Multiplicity Conjecture from the Lipschitz point of view. For instance, Neumann and Pichon  \cite{N-P}, with previous contributions of Pham and Teissier  \cite{P-T} and Fernandes  \cite{F}, proved that the bi-Lipschitz geometry of plane curves determines the Puiseux pairs, and as a consequence  if two germs of complex analytic curves with any codimension are bi-Lipschitz homeomorphic (with respect to the outer metric), then they have the same multiplicity. Comte  \cite{Comte:1998} proved that  multiplicity of complex analytic germs (not necessarily codimension 1 sets) is invariant under bi-Lipschitz homeomorphisms with the severe assumption that the Lipschitz constants are close enough to 1. This motivated the following conjecture in \cite{BobadillaFS:2018}:
\begin{conjecture} \label{conj_local}
Let $X\subset \C^n$ and $Y\subset \C^m$ be two complex analytic sets with $\dim X=\dim Y=d$. If their germs at zero are bi-Lipschitz homeomorphic, then their multiplicities $m(X,0)$ and $m(Y,0)$ are equal.
\end{conjecture}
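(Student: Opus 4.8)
The plan is to translate the purely metric hypothesis into a statement about densities and then about tangent cones. Recall that for a $d$-dimensional complex analytic germ the multiplicity coincides with the Lelong number, i.e.\ with the $2d$-dimensional density $\Theta_{2d}(X,0)=\lim_{r\to 0}\frac{\mathcal{H}^{2d}(X\cap B_r)}{\omega_{2d}\,r^{2d}}$ (Thie's theorem, refining Lelong and Draper). Thus $m(X,0)=\Theta_{2d}(X,0)$ and $m(Y,0)=\Theta_{2d}(Y,0)$, and the problem becomes one of comparing two densities. A bi-Lipschitz homeomorphism $h$ with constant $C$ immediately yields $C^{-2d}\,\Theta_{2d}(X,0)\le \Theta_{2d}(Y,0)\le C^{2d}\,\Theta_{2d}(X,0)$; since both densities are positive integers, this forces equality once $C$ is close enough to $1$, which is exactly Comte's theorem. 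The entire difficulty is to remove the restriction on $C$.

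To do so I would pass to the tangent cones. The density of the germ at $0$ equals the density at the vertex of its tangent cone $C(X,0)$, and for a complex algebraic cone this density is precisely its degree; hence $m(X,0)=\deg C(X,0)$ and $m(Y,0)=\deg C(Y,0)$. The next step is to show that $h$ induces a bi-Lipschitz homeomorphism between the tangent cones. This is carried out by a spherical blow-up: one studies the family $x\mapsto h(tx)/t$ as $t\to 0^+$, using that the Lipschitz bound makes it equicontinuous and its limit a bi-Lipschitz homeomorphism $C(X,0)\to C(Y,0)$ respecting the cone structure, equivalently a bi-Lipschitz homeomorphism of the links $L_X=C(X,0)\cap S$ and $L_Y=C(Y,0)\cap S$. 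At this point the conjecture reduces to the following assertion: two complex algebraic cones of the same dimension whose links are bi-Lipschitz homeomorphic have equal degree.

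The main obstacle is precisely this last reduction. The degree of a complex cone equals the suitably normalized $(2d-1)$-volume of its link, so what must be proved is that the induced map $L_X\to L_Y$ preserves $\mathcal{H}^{2d-1}$-measure \emph{exactly}, whereas bi-Lipschitz maps guarantee only bounded distortion of this measure — again the two-sided bound, but not equality. Overcoming this requires injecting extra rigidity that upgrades bi-Lipschitz into something volume-preserving on the cone. When the graph of $h$ is itself a complex analytic set, as in the complex theorem proved in this paper, that rigidity is available: the limit map on tangent cones is then the restriction of a complex analytic correspondence, so its action on cycles is governed by intersection theory and the degree is preserved. In the absence of such an analytic (or near-isometric) hypothesis I do not see how to force equality of the link volumes, and this is exactly where the method stalls and the general conjecture remains open. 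The most promising line of attack would be to analyze the relative multiplicities of the irreducible components of the tangent cone and argue that a bi-Lipschitz homeomorphism cannot merge them or alter these multiplicities; controlling those multiplicities from metric data alone is the crux of the problem.
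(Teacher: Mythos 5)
You have correctly diagnosed where the method stalls, but your closing assessment is wrong, and in a way that matters: the statement you were given is Conjecture \ref{conj_local}, which the paper does \emph{not} prove. On the contrary, the paper records that it was \emph{disproved} for $d\ge 3$ by Birbrair, Fernandes, Sampaio and Verbitsky \cite{BirbrairFSV:2020}, who exhibit two different embeddings of $\mathbb{P}^{1}(\C)\times\mathbb{P}^{1}(\C)$ into $\mathbb{P}^{5}(\C)$ whose affine cones in $\C^6$ are bi-Lipschitz equivalent but have different degrees. This counterexample kills precisely the reduction you propose as the ``most promising line of attack'': the sets there are homogeneous algebraic cones, equal to their own tangent cones, with bi-Lipschitz equivalent links, and yet their degrees (equivalently, multiplicities at the vertex) differ. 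So the assertion to which you reduce the problem --- that two same-dimensional complex algebraic cones with bi-Lipschitz homeomorphic links have equal degree --- is simply false for $d\ge 3$, and no control of the relative multiplicities of tangent-cone components from metric data can repair it. What survives is exactly what the paper catalogues: the cases $d=1,2$ \cite{BobadillaFS:2018}, multiplicity one via Lipschitz regularity \cite{Sampaio:2016}, Comte's theorem for Lipschitz constants close to $1$ \cite{Comte:1998}, the mod $2$ statements under arc-analyticity (Theorem \ref{proof_FKP-conjecture}, Proposition \ref{inv_mult_homo_cycle_mod_two}), and the rigid situation where the graph of $h$ is complex analytic (Theorem \ref{inv_mult}); only the hypersurface case in $\C^n$, $n>3$, remains open.

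Your intermediate steps are essentially sound as far as they go: multiplicity equals Lelong density (Thie), the two-sided density estimate recovering Comte's theorem, and the fact that a bi-Lipschitz homeomorphism of germs induces a bi-Lipschitz homeomorphism of tangent cones, which is Sampaio's theorem \cite{Sampaio:2016}. One technical imprecision: $m(X,0)$ is \emph{not} in general the degree of the reduced tangent cone; it is the degree counted with the relative multiplicities of the components, $m(X,0)=\sum_i k_X(A_i)\,m(A_i,0)$, as used in the proof of Proposition \ref{inv_mult_homo_cycle_mod_two}, so even for $d\le 2$ your reduction needs these weights. But the essential point stands: the correct conclusion to draw from the stall you identified is not that the conjecture ``remains open'' but that the obstruction is realized by an actual counterexample, so the statement is false in general and any blind proof attempt was doomed from the start.
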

In \cite{BobadillaFS:2018}the following conjecture  was also posed:
\begin{conjecture}\label{conj_infinity}
Let $X\subset \C^n$ and $Y\subset \C^m$ be two complex algebraic sets with $\dim X=\dim Y=d$. 
If $X$ and $Y$ are bi-Lipschitz homeomorphic at infinity, then  ${\rm deg}(X)={\rm deg}(Y)$.
\end{conjecture}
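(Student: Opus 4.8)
The plan is to move the whole question to infinity and there replace the degree, which is a metric quantity a priori only controlled up to the Lipschitz constant, by a homological one that a homeomorphism transports exactly. For a complex algebraic set $X\subset\C^n$ of pure dimension $d$ with no irreducible component at infinity, the projective closure $\overline X\subset\mathbb{P}^n(\C)$ meets the hyperplane at infinity $H_\infty$ in a variety of dimension $d-1$ and degree ${\rm deg}\,X$, so the affine cone $C_\infty(X)\subset\C^n$ over $\overline X\cap H_\infty$ --- the tangent cone of $X$ at infinity --- is a complex algebraic cone with ${\rm deg}\,C_\infty(X)={\rm deg}\,X$. My first step is to show that a bi-Lipschitz homeomorphism at infinity $\phi\colon X\setminus B_R\to Y\setminus B_{R'}$ induces a bi-Lipschitz homeomorphism $\widehat\phi\colon C_\infty(X)\to C_\infty(Y)$, by adapting to the point at infinity the spherical blow-up argument that shows bi-Lipschitz homeomorphisms preserve tangent cones: one follows the rescaled images $\phi(tv)/t$ as $t\to\infty$ and uses the two Lipschitz estimates to see that the limiting directions define a bi-Lipschitz bijection of the links of the two cones inside $S^{2n-1}$ and $S^{2m-1}$. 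This reduces the conjecture to the statement that bi-Lipschitz homeomorphic complex algebraic cones through the origin have equal degree.

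The engine of the proof is then homological rather than metric. Write $\overline{C_\infty(X)}\subset\mathbb{P}^n(\C)$ for the projective cone; its degree is the value of its fundamental class, $[\overline{C_\infty(X)}]={\rm deg}\,C_\infty(X)\cdot[\mathbb{P}^d]$ in $H_{2d}(\mathbb{P}^n(\C);\Z)\cong\Z$. Form the closure $\Gamma$ of the graph of $\widehat\phi$ in $\mathbb{P}^{n+m}(\C)$ and consider the two linear projections $\pi_X\colon\Gamma\to\overline{C_\infty(X)}$ and $\pi_Y\colon\Gamma\to\overline{C_\infty(Y)}$; because $\widehat\phi$ is a homeomorphism, both are generically one-to-one. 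Suppose for the moment that $\Gamma$ carries a fundamental class $[\Gamma]=k\,[\mathbb{P}^d]\in H_{2d}(\mathbb{P}^{n+m}(\C);\Z)$. Since $d\le\min(n,m)$ the projections send $[\mathbb{P}^d]$ to $[\mathbb{P}^d]$, so $(\pi_X)_*[\Gamma]=k\,[\mathbb{P}^d]$; on the other hand a generically one-to-one map of oriented $2d$-pseudomanifolds has degree $\pm1$, whence $(\pi_X)_*[\Gamma]=\pm[\overline{C_\infty(X)}]$ and therefore $|k|={\rm deg}\,C_\infty(X)$. The same computation through $\pi_Y$ gives $|k|={\rm deg}\,C_\infty(Y)$, so ${\rm deg}\,X={\rm deg}\,Y$. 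The point to stress is that the Lipschitz constant has disappeared: the topological degree of a homeomorphism is $\pm1$ no matter how much $\widehat\phi$ distorts lengths, and taking $|k|$ absorbs the orientation sign, so this argument is immune to the volume distortion $L^{2d}$ that defeats any attempt to compare degrees through the density $\Theta_{2d}(\cdot,0)$ alone.

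Everything therefore rests on the bracketed assumption, and this is where I expect the real difficulty to lie: one must know that $\Gamma$ is an honest $2d$-dimensional homological cycle with a $\Z$-fundamental class. Orientability of the top stratum is not the issue --- on its affine part $\Gamma$ is homeomorphic, via $\pi_X$, to the complex manifold $\mathrm{Reg}\,C_\infty(X)$, hence orientable --- the genuine obstacles are that $\Gamma$ be \emph{tame} enough to carry a fundamental class at all and that it be a \emph{cycle}, i.e. have no boundary once the points at infinity of the graph are adjoined, with the orientation of the affine part extending consistently across them. When $\phi$ is semialgebraic the first obstacle disappears, since $\Gamma$ is then a compact semialgebraic set of pure dimension $2d$ and hence triangulable; the remaining requirement, that $\Gamma$ be an orientable homological cycle, is exactly the hypothesis under which the argument above becomes a complete proof (the conditional theorem of the abstract). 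For a merely bi-Lipschitz $\phi$ one loses even triangulability and would first have to install tameness, for instance through an o-minimal structure compatible with $\phi$ or an approximation of $\phi$ by tame maps.

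To remove the cycle hypothesis --- which is what an unconditional proof of the conjecture demands --- I would analyse $\Gamma$ near $H_\infty$. Its link at infinity is the graph of the induced bi-Lipschitz homeomorphism $\psi$ between the links $\mathrm{Link}\,C_\infty(X)$ and $\mathrm{Link}\,C_\infty(Y)$, which are compact $(2d-1)$-dimensional sets without boundary; if one can show that adjoining this graph to the affine cone graph produces no net boundary and no orientation reversal, then $\Gamma$ is automatically a cycle and the conjecture follows in full. Proving this boundary cancellation for an arbitrary subanalytic bi-Lipschitz $\psi$ --- where no holomorphicity is available to force orientation coherence --- is the crux I cannot presently settle, and it is the single step separating the conditional theorem from the conjecture as stated.
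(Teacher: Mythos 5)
There is a fundamental problem here that precedes any analysis of your argument: the statement you were asked about is Conjecture \ref{conj_infinity} of the paper, and the paper does not prove it --- on the contrary, it records that the conjecture is \emph{false} in general. As cited in the introduction, Birbrair, Fernandes, Sampaio and Verbitsky (Math.\ Ann.\ 2020) showed that the conjecture (and the equivalent local Conjecture \ref{conj_local}) holds for $d=1,2$ but fails for $d\ge 3$: there exist two embeddings $X,Y$ of $\mathbb{P}^{1}(\C)\times\mathbb{P}^{1}(\C)$ into $\mathbb{P}^{5}(\C)$ whose affine cones in $\C^{6}$ are bi-Lipschitz homeomorphic but have different degrees. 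Since those counterexamples are cones, they are in particular bi-Lipschitz homeomorphic at infinity, and they defeat not only the conjecture as stated but also the reduced statement your first step arrives at (``bi-Lipschitz homeomorphic complex algebraic cones through the origin have equal degree''). Consequently the step you candidly flag as the crux --- showing that the closure $\Gamma$ of the graph automatically carries a $\Z$-fundamental class, i.e.\ the boundary cancellation at infinity --- is not merely unsettled but provably impossible in general: for the counterexample pair no such cycle structure can exist, since otherwise your (essentially correct) conditional argument would force equal degrees. What your conditional argument does establish is, in substance, the paper's Theorem \ref{inv_degree_homo_cycle}, whose hypothesis (a semialgebraic semi-bi-Lipschitz homeomorphism whose graph closure in $\mathbb{P}^{n+m}(\C)$ is an orientable homological cycle) is exactly the hypothesis that cannot be dropped. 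Your instinct that ``no holomorphicity is available to force orientation coherence'' is precisely where the conjecture dies.

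Independently of this, your opening reduction contains a concrete error: $\deg C_\infty(X)=\deg X$ is false for the \emph{reduced} cone over $\overline{X}\cap H_\infty$, because B\'ezout counts intersection multiplicities which the set-theoretic cone forgets. For $X=\{y=x^{2}\}\subset\C^{2}$ one has $\deg X=2$, while $\overline{X}\cap H_\infty$ is the single point $(0:1:0)$ and $C_\infty(X)$ is a line of degree $1$. So even granting that a bi-Lipschitz homeomorphism at infinity induces a bi-Lipschitz homeomorphism of tangent cones at infinity (an analogue at infinity of Sampaio's tangent-cone theorem, which is fine), equality of the cones' degrees would not yield $\deg X=\deg Y$. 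The paper's conditional theorems (Theorems \ref{inv_degree_homo_cycle}, \ref{inv_degree_semi}, \ref{inv_degree_derivative}) sidestep this by working with the closure of the graph of $h$ itself and intersecting it with generic affine subspaces of the form $\pi^{-1}(L)$, so that $\deg X$ is computed directly on $X$ and no multiplicity is lost in passing to a cone; if you want to salvage a cone-based reduction you would have to carry the multiplicities of the components of $C_\infty(X)$ along as weights, and a bi-Lipschitz map is not known to preserve those weights --- indeed, by the counterexample, it does not.
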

Still in \cite{BobadillaFS:2018} the authors proved that Conjectures \ref{conj_local} and \ref{conj_infinity} are equivalent and, moreover, have positive answers for $d=1$ and $d=2$. However, Birbrair et al.  \cite{BirbrairFSV:2020} disproved  these conjectures when $d\ge 3$, by showing explicit counter-examples. More precisely, it was shown that we have two different embeddings of   $\mathbb{P}^{1}(\C)\times \mathbb{P}^{1}(\C)$ into $\mathbb{P}^{5}(\C)$, say $X$ and $Y$, such that their affine cones $\textup{Cone}(X),\textup{Cone}(Y)\subset \C^6$ are bi-Lipschitz equivalent, but they have different degrees. Hence the problem of invariance of degree under bi-Lipschitz homeomorphisms is still open in the important case of affine hypersurfaces in $\C^n$ with $n>3.$ Moreover, there are several cases where Conjectures \ref{conj_local} and \ref{conj_infinity} hold true, for instance, the Lipschitz Regularity Theorem  \cite[Theorem 4.2]{Sampaio:2016} (see also \cite{BirbrairFLS:2016}) shows that if a germ of an analytic set is bi-Lipschitz equivalent to a smooth germ, then it is smooth itself, which implies that  multiplicity $1$ of a complex analytic germ  is a bi-Lipschitz invariant. Fernandes and Sampaio  \cite{FernandesS:2020} proved that  degree $1$ of a complex algebraic set is invariant under bi-Lipschitz homeomorphism at infinity and Sampaio in \cite{Sampaio:2019} proved the version of Comte's result for the degree: the degree of complex algebraic sets is invariant under bi-Lipschitz homeomorphism at infinity with Lipschitz constant close enough to 1. Recently, Jelonek  \cite{Jelonek:2021} proved that the multiplicity of complex analytic sets is invariant under bi-Lipschitz homeomorphisms which have analytic graphs, and the degree of complex algebraic sets is invariant under bi-Lipschitz homeomorphisms (at infinity) which have algebraic graph.

In this paper, we prove some generalizations of the results proved by Jelonek  \cite{Jelonek:2021}. For instance, we show that the multiplicity of complex analytic sets is invariant under semi-bi-Lipschitz homeomorphisms which have analytic graph (see Theorem \ref{inv_mult}) and the degree of complex algebraic sets is invariant under semi-bi-Lipschitz homeomorphisms at infinity which have algebraic graph (see Theorem \ref{inv_degree_semi}). We also prove  that degree of a complex algebraic set is invariant under semialgebraic semi-bi-Lipschitz homeomorphisms at infinity such that the closure of their graphs are  orientable homological cycles (see Theorem \ref{inv_degree_homo_cycle}).

\section{Preliminaries}\label{section:preliminaries}

\subsection{Lipschitz and semi-bi-Lipschitz mappings}

\begin{definition}
Let $X\subset \R^n$ and $Y\subset \R^m$ be two sets and let $h\colon X\to Y$.
\begin{itemize}
 \item We say that $h$ is {\bf Lipschitz} if there exists a positive constant $C$ such that
$$\|h(x)-h(y)\|\leq C\|x-y\|, \quad \forall x, y\in X.
$$ 
\item We say that $h$ is {\bf bi-Lipschitz}  if $h$ is a homeomorphism, it is Lipschitz and its inverse  is also Lipschitz.
\item We say that $h$ is {\bf bi-Lipschitz at infinity} (resp. {\bf a homeomorphism at infinity}) if there exist compact subsets $K\subset \R^n$ and $K'\subset \R^m$ such that $h|_{X\setminus K}\colon X\setminus K\to Y\setminus K'$ is bi-Lipschitz (resp. a homeomorphism).
 \item We say that $h$ is {\bf semi-Lipschitz at $x_0\in X$} if there exist a positive constant $C $  such that
$$
\|h(x)-h(x_0)\|\leq C\|x-x_0\|, \quad \forall x\in X.
$$
\item  We say that $h$ is {\bf semi-bi-Lipschitz}  if $h$ is a homeomorphism, it is semi-Lipschitz at $x_0$ and its inverse  is also semi-Lipschitz at $h(x_0)$.
\item We say that $h$ is {\bf semi-bi-Lipschitz at infinity} if there exist compact subsets $K\subset \R^n$ and $K'\subset \R^m$ such that $h|_{X\setminus K}\colon X\setminus K\to Y\setminus K'$ is semi-bi-Lipschitz at some point $x_0\in X\setminus K$.
\end{itemize}
\end{definition}

Now we give a geometric characterization of semi-bi-Lipschitz mappings. For a similar characterization of bi-Lipschitz mappings see \cite{Jelonek:2021}.

\begin{definition}
Let $L^s, H^{n-s-1}$ be two disjoint linear subspaces of $\mathbb{P}^{n}(\C).$ Let $\pi_\infty$ be a hyperplane (a hyperplane at infinity) and assume that
$L^s \subset \pi_\infty.$ The projection $\pi_L$ with center $L^s$ is the mapping
$$ \pi_L \colon \C^n=\mathbb{P}^{n}(\C)\setminus \pi_\infty\ni x \mapsto \langle L^s,x \rangle \cap H^{n-s-1}\in H^{n-s-1}\setminus \pi_\infty=\C^{n-s-1}.$$ Here  $\langle L,x\rangle$
we mean the linear projective subspace spanned by $L$ and $\{x\}.$
\end{definition}

\begin{lemma}\label{lemat}
Let $X$ be a closed subset of $\C^n.$ Denote by $\Lambda_0\subset \pi_\infty$ the set of directions of all secants of $X$ which contain $x_0$ and let $\Sigma_0= \overline{\Lambda_0},$
where $\pi_\infty$ is the hyperplane at infinity and we consider the euclidean  closure. Let $\pi_L:\C^n\to \C^l$ be the projection with center $L$. Then $\pi_L|_X$ is  semi-bi-Lipschitz at $x_0$ if and only if $L\cap \Sigma_0=\emptyset.$
\end{lemma}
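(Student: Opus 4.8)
The plan is to convert the projective description of $\pi_L$ into affine-linear terms and then reduce the semi-bi-Lipschitz property at $x_0$ to a single uniform lower bound on a difference quotient. First I would record that, since $L^s\subset\pi_\infty$, the restriction of $\pi_L$ to $\C^n=\mathbb{P}^n(\C)\setminus\pi_\infty$ is an affine map whose linear part is the $\C$-linear projection with kernel the $(s+1)$-dimensional subspace $W\subset\C^n$ whose set of directions is exactly $L$; thus $[u]\in L$ iff $u\in W$ iff $\pi_L(u)=0$. Writing the linear part again as $\pi_L$, we have $\pi_L(x)-\pi_L(x_0)=\pi_L(x-x_0)$, and since $\pi_L$ is a bounded operator the inequality $\|\pi_L(x)-\pi_L(x_0)\|\le C\|x-x_0\|$ (semi-Lipschitzness of $\pi_L|_X$ at $x_0$) holds automatically. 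Hence the entire content of the statement is the semi-Lipschitz estimate for the inverse: the existence of $c>0$ with
\[
\|\pi_L(x-x_0)\|\ge c\,\|x-x_0\|\qquad(\forall\,x\in X\setminus\{x_0\}).
\]

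Next I would introduce the function $\varphi\colon\pi_\infty\to[0,\infty)$, $\varphi([u])=\|\pi_L(u)\|/\|u\|$, which is well defined and continuous on $\pi_\infty=\mathbb{P}^{n-1}(\C)$ because the ratio is invariant under complex rescaling of $u$, and whose zero set is precisely $L$. The difference quotient above is exactly $\varphi$ evaluated at the secant direction $[x-x_0]\in\Lambda_0$, so
\[
\inf_{x\in X\setminus\{x_0\}}\frac{\|\pi_L(x-x_0)\|}{\|x-x_0\|}=\inf_{\xi\in\Lambda_0}\varphi(\xi)=\inf_{\xi\in\Sigma_0}\varphi(\xi),
\]
the last equality holding because $\varphi$ is continuous and $\Sigma_0=\overline{\Lambda_0}$. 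Since $\Sigma_0$ is a closed, hence compact, subset of $\mathbb{P}^{n-1}(\C)$, the continuous $\varphi$ attains its minimum on $\Sigma_0$, and this minimum is strictly positive exactly when $\varphi$ has no zero on $\Sigma_0$, i.e. exactly when $\Sigma_0\cap L=\emptyset$. This yields the lower bound with $c=\min_{\Sigma_0}\varphi$ when $\Sigma_0\cap L=\emptyset$; conversely, if $\xi\in\Sigma_0\cap L$, choosing $x_k\in X$ with $[x_k-x_0]\to\xi$ forces the quotient to tend to $\varphi(\xi)=0$, which destroys every lower bound and hence the semi-Lipschitzness of the inverse.

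I expect the main obstacle to be the passage to the closure: the estimate must be uniform over all of $X$, yet $\Lambda_0$ itself need not be closed, so a lower bound can be spoiled by secant directions that only tend to $L$ in the limit. This is precisely why the statement is phrased with $\Sigma_0=\overline{\Lambda_0}$ rather than $\Lambda_0$, and the resolution is the compactness of $\mathbb{P}^{n-1}(\C)$ together with the continuity of $\varphi$, which upgrade the qualitative condition ``disjoint from $L$'' to a quantitative uniform gap. A secondary point to handle is the homeomorphism clause in the definition of semi-bi-Lipschitz: the two displayed inequalities are exactly the semi-Lipschitz conditions for $\pi_L|_X$ at $x_0$ and for its inverse at $\pi_L(x_0)$, and I would verify the homeomorphism-onto-image assertion directly from them, noting in particular that the lower bound already rules out any $x\neq x_0$ collapsing onto $\pi_L(x_0)$.
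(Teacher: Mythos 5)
Your proof is correct, and it takes a genuinely different and more direct route than the paper's. The paper argues by induction: after a linear change of coordinates it factors $\pi_L$ as a composition $\pi=\pi_2\circ\pi_1$, where $\pi_1$ is the projection from a single point $P_1=(0:\cdots:1)\in L$, proves the one-point case by exactly the sequence-of-secant-directions contradiction you use in your converse (a sequence $x_j$ with $\|p(x_j)-p(x_0)\|/\|x_j-x_0\|\to 0$ forces the secant directions $P_j$ to converge to $P_1\in \Sigma_0$, contradicting $L\cap\Sigma_0=\emptyset$), then checks how the data transform ($\Sigma_0'=\pi_1(\Sigma_0)$, $L'=L\cap\{x_n=0\}$, $\Sigma_0'\cap L'=\emptyset$) and invokes the inductive hypothesis for $\pi_2$; the ``only if'' direction is likewise a bad-sequence argument, using that $\pi_2$ is $1$-Lipschitz. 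Your single compactness argument --- the continuous function $\varphi([u])=\|\pi_L(u)\|/\|u\|$ on $\mathbb{P}^{n-1}(\C)$ with zero locus exactly $L$, combined with $\Lambda_0\subseteq\Sigma_0$ and the compactness of $\Sigma_0$ --- replaces the whole induction, yields the explicit constant $c=\min_{\Sigma_0}\varphi$, and makes transparent why the hypothesis must involve the closure $\Sigma_0$ rather than $\Lambda_0$, a point that in the paper is visible only implicitly in the limiting step. What the paper's factorization buys is mainly notational scaffolding ($\pi=\pi_2\circ\pi_1$ and the behaviour of $\Sigma_0$ under projections) echoed in later arguments; nothing in the applications of the lemma requires the inductive route.

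One sentence of your closing paragraph overreaches: the two pointed inequalities at $x_0$ cannot yield that $\pi_L|_X$ is a homeomorphism onto its image, because injectivity may fail between two points both different from $x_0$. For instance, take $X=\{(0,0),(1,0),(1,1)\}\subset\C^2$ and $\pi_L$ the projection onto the first coordinate: here $\Sigma_0\cap L=\emptyset$, yet $\pi_L$ identifies $(1,0)$ with $(1,1)$. So under a literal reading of the paper's definition of semi-bi-Lipschitz (which demands a homeomorphism) the ``if'' direction would be false. This is a shared convention rather than a defect of your argument: the paper's own proof, like yours, establishes only the two-sided estimate at $x_0$, so ``semi-bi-Lipschitz at $x_0$'' is evidently intended to mean just that pair of inequalities, and in the later applications (projections of graphs) injectivity is automatic. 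You should simply delete the claim that the homeomorphism clause follows from the estimates, or state this reading explicitly.
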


\begin{proof}
a) Assume that $L\cap \Sigma_0=\emptyset.$ We will proceed by induction.
Since a linear affine isomorphism is a bi-Lipschitz homeomorphism, we can assume that $\pi_L$ coincides with the projection
$\pi\colon \C^n\ni (x_1,\ldots,x_n) \mapsto (x_1,\ldots,x_k,0,\ldots,0)\in \C^k\times \{0,\ldots,0\}.$ We can decompose $\pi$ into two projections:
$\pi=\pi_{2}\circ \pi_{1}$, where $\pi_1\colon\C^n \ni (x_1,\ldots,x_n)\mapsto (x_1,\ldots,x_{n-1},0)=\C^{n-1}\times 0$ is the projection with  center $P_1=(0:0:\ldots:1)$
and $\pi_2 \colon\C^{n-1}\ni (x_1,\ldots,x_{n-1},0)\mapsto (x_1,\ldots,x_{k},0,\ldots,0)\in \C^{k}\times \{(0,\ldots,0)\}$ is the projection with  center $L':=\{ x_0=0,\ldots,x_{k}=0\}.$ Since $P_1\in L$ and consequently $P_1\not\in \Sigma$, we will prove  that $\pi_1$ is a semi-bi-Lipschitz homeomorphism.
Indeed,  $P_1\in \mathbb{P}^{n-1}(\C)\setminus \Sigma_0.$   We show that the projection $p=\pi_1|_X\colon X\to \C^{n-1}\times 0$  is  semi-bi-Lipschitz.
Of course $\|p(x)-p(x_0)\|\le \|x-x_0\|.$ Assume that $p$ is not semi-bi-Lipschitz, i,e., there is a sequence of points $x_j\in X$
such that $$\frac{\|p(x_j)-p(x_0)\|}{\|x_j-x_0\|}\to 0$$ as $j\to \infty.$ Let $x_j-x_0=(a_1(j),\ldots,a_{n-1}(j),b(j))$ and denote by
$P_j$ the corresponding point $(a_1(j):\ldots:a_{n-1}(j):b(j))$ in $\mathbb{P}^{n-1}(\C).$ Hence $$P_j=\frac{(a_1(j):\ldots:a_{n-1}(j):b(j))}{\|x_j-x_0\|}.$$ Since $\frac{(a_1(j),\ldots,a_{n-1}(j))}{\|x_j-x_0\|}=
\frac{p(x_j)-p(x_0)}{\|x_j-x_0\|}\to 0$, we see that $P_j\to P.$ It is a contradiction.
Notice that if $\pi_1(X)=X'$, then $\Sigma_0'=\pi_1(\Sigma_0), x_0'=p(x_0).$ Moreover $L'=L\cap \{ x_n=0\}$ and $\langle L', P_1\rangle=L$. This means that $\Sigma_0'\cap L'=\emptyset.$ Now  by induction the projection $\pi_2|_{p(X)}$ is semi-bi-Lipschitz at $p(x_0),$ hence also $\pi=\pi_1\circ\pi_2$ is semi-bi-Lipschitz.

\vspace{5mm}

b) Assume that $\pi_L|_X$ is a semi-bi-Lipschitz mapping and $\Sigma_0 \cap L\not=\emptyset.$ As before we can change the system of coordinates in such a way that
$\pi\colon \C^n\ni (x_1,\ldots,x_n) \mapsto (x_1,\ldots,x_k,0,\ldots,0)\in \C^k\times \{0,\ldots,0\}.$ Moreover, we can assume that $P_1=(0:0:\ldots:1)\in \Sigma_0.$
Actually $\pi_1$ is not semi-bi-Lipschitz. Indeed there is a sequence of secants $l_n=\langle x_n,x_0\rangle$ of $X$ whose directions tend to $P_1.$
 Let $x_j-x_0=(a_1(j),\ldots,a_{n-1}(j),b(j))$ and denote by
$P_j$ the corresponding point $(a_1(j):\ldots:a_{n-1}(j):b(j))$ in $\mathbb{P}^{n-1}(\C).$ Hence $$P_j=\frac{(a_1(j):\ldots:a_{n-1}(j):b(j))}{\|x_j-x_0\|}.$$ Since  $P_j\to P$ we have $\frac{(a_1(j),\ldots,a_{n-1}(j))}{\|x_j-x_0\|}=
\frac{p(x_j)-p(x_0)}{\|x_j-x_0\|}\to 0$. Hence the mapping $\pi_1$ is not semi-bi-Lipschitz at $x_0.$ 

Let $x_0'=\pi_1(x_0).$ Now it is enough to note that $\|\pi_2(x)-\pi_2(x_0')\|\le \|x-x_0'\|,$ hence $\|\pi(x_n)-\pi(x_0)\|=\|\pi_2(\pi_1(x_n))-\pi_2(\pi_1(x_0))\|\le \|\pi_1(x_n)-\pi_1(x_0)\|$. Thus  $$\frac{\|x_n-x_0\|}{\|\pi(x_n)-\pi(x_0)\|}\ge \frac{\|x_n-x_0\|}{\|\pi_1(x_n)-\pi_1(x_0)\|}\to \infty.$$ This contradiction finishes the proof.
\end{proof}

\begin{lemma}\label{wykres}
Let $X\subset \C^{n}$ be  a closed  set and let $f\colon X\to\C^m$ be a semi-Lipschitz homeomorphism. Let $Y:=\textup{graph}(f)\subset \C^n\times \C^m.$
Then the mapping $\phi\colon X\ni x \mapsto (x,f(x)) \in Y$ is a semi-bi-Lipschitz homeomorphism.
\end{lemma}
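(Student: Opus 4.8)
The plan is to unwind the definition of semi-bi-Lipschitz directly, exploiting the orthogonal (Pythagorean) structure of the product $\C^n\times\C^m$. Fix a point $x_0\in X$ at which $f$ is semi-Lipschitz, say with constant $C$, so that $\|f(x)-f(x_0)\|\le C\|x-x_0\|$ for every $x\in X$; I will show that $\phi$ is a homeomorphism, that $\phi$ is semi-Lipschitz at $x_0$, and that $\phi^{-1}$ is semi-Lipschitz at $\phi(x_0)=(x_0,f(x_0))$. These are precisely the three clauses defining a semi-bi-Lipschitz homeomorphism.

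First I would record that $\phi$ is a homeomorphism onto $Y$. Its inverse is nothing but the restriction to $Y$ of the linear projection $\pi_1\colon \C^n\times\C^m\to\C^n$, $(x,y)\mapsto x$, because $\pi_1(\phi(x))=x$ for all $x$; since $f$ is continuous, $\phi=(\mathrm{id},f)$ is continuous, and $\pi_1|_Y$ is continuous as the restriction of a linear map, so $\phi$ is a homeomorphism. Next, for the forward bound at $x_0$, I would compute
$$\|\phi(x)-\phi(x_0)\|^2=\|x-x_0\|^2+\|f(x)-f(x_0)\|^2\le(1+C^2)\|x-x_0\|^2,$$
which gives $\|\phi(x)-\phi(x_0)\|\le\sqrt{1+C^2}\,\|x-x_0\|$ for all $x\in X$; hence $\phi$ is semi-Lipschitz at $x_0$ with constant $\sqrt{1+C^2}$.

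Finally, the inverse bound is essentially free: since $\pi_1$ never increases distances, for every $(x,y)\in Y$ one has
$$\|\phi^{-1}(x,y)-\phi^{-1}(x_0,f(x_0))\|=\|x-x_0\|\le\sqrt{\|x-x_0\|^2+\|y-f(x_0)\|^2}=\|(x,y)-(x_0,f(x_0))\|,$$
so $\phi^{-1}$ is semi-Lipschitz at $\phi(x_0)$ with constant $1$. The only point worth flagging — and where one might naively expect difficulty — is that \emph{no} Lipschitz-type hypothesis on $f^{-1}$ is required: the inverse direction is controlled automatically because projection onto the first factor is globally $1$-Lipschitz. Thus there is no genuine obstacle here; all the content sits in the identity $\|\phi(x)-\phi(x_0)\|^2=\|x-x_0\|^2+\|f(x)-f(x_0)\|^2$ together with the semi-Lipschitz estimate for $f$.
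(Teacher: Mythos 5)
Your proof is correct and follows essentially the same route as the paper: a direct two-sided estimate on $\|\phi(x)-\phi(x_0)\|$ in the product space, with the lower bound coming for free from the $1$-Lipschitz projection onto the first factor. The only cosmetic difference is that you use the Pythagorean identity for the Euclidean product norm (constant $\sqrt{1+C^2}$) where the paper uses the triangle inequality (constant $1+C$), and you spell out the homeomorphism clause that the paper leaves implicit.
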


\begin{proof}
Since $f$ is semi-Lipschitz at $x_0$, there is a constant $C$ such that $$\|f(x)-f(x_0)\|\leq C\|x-x_0\|.$$
We have $$\|\phi(x)-\phi(x_0)\|=\|(x-x_0,f(x)-f(x_0))\|$$  $$ \le \|x-x_0\|+\|f(x)-f(x_0)\|\le \|x-x_0\|+C\|x-x_0\|\le (1+C)\|x-x_0\|.$$
Moreover $$\|x-x_0\| \le \|\phi(x)-\phi(x_0)\|.$$ Hence $$\|x-x_0\|\le \|\phi(x)-\phi(x_0)\|\le (1+C)\|x-x_0\|.$$
\end{proof}

\begin{remark}
{\rm It is easy to note that Lemmas \ref{lemat} and  \ref{wykres} hold in the real case also.}
\end{remark}

\begin{definition}
Let $A\subset \R^n$ be a subset. We say that $v\in \R^n$ is a {\bf tangent vector to} $A$  {\bf at $p\in \overline{A}$} (resp. {\bf at infinity}) if there is a sequence of points $\{x_i\}_{i\in \N}\subset A$ such that $\lim\limits_{i\to \infty} \|x_i-p\|=0$ (resp. $\lim\limits_{i\to \infty} \|x_i\|=+\infty $) and there is a sequence of positive numbers $\{t_i\}_{i\in \N}\subset\R^+$ such that 
$$\lim\limits_{i\to \infty} \frac{1}{t_i}(x_i-p)= v\quad \mbox{(resp. } \lim\limits_{i\to \infty} \frac{1}{t_i}x_i= v).$$
Let $C(A,p)$ (resp. $C_{\infty }(A)$) denote the set of all tangent vectors to $A$ at $p$ (resp. at infinity). The subset $C(A,p)$ (resp. $C_{\infty }(A)$) is called {\bf the tangent cone of $A$ at $p$} (resp. {\bf at infinity}).
\end{definition}

\begin{definition}
Let $X\subset \R^n$ and $Y\subset \R^m$ be subanalytic sets with $0\in X$ and $0\in Y$ and let $h\colon (X,0)\to (Y,0)$ be a subanalytic Lipschitz mapping. We define the {\bf pseudo-derivative of $h$ at $0$}, $d_{0}h\colon C(X,0)\to C(Y,0)$, by $d_{0}h(v)=\lim\limits_{t\to 0^+}\frac{h(\gamma(t))}{t}$, where $\gamma\colon [0,+\varepsilon)\to X$ satisfies $\lim\limits_{t\to 0^+}\frac{\gamma(t)}{t}=v$. 
\end{definition}

\begin{definition}
Let $X\subset \R^n$ and $Y\subset \R^m$ be semialgebraic sets and let $h\colon X\to Y$ be a semialgebraic Lipschitz mapping. We define the {\bf pseudo-derivative of $h$ at infinity} $d_{\infty}h\colon C(X,\infty)\to C(Y,\infty)$ by $d_{\infty}h(v)=\lim\limits_{t\to +\infty}\frac{h(\gamma(t))}{t}$, where $\gamma\colon (r,+\infty)\to X$ satisfies $\lim\limits_{t\to +\infty}\frac{\gamma(t)}{t}=v$. 
\end{definition}

\subsection{Multiplicity and degree of real sets}
Let $X\subset \R^{n}$ be a $d$-dimensional real analytic set with $0\in X$ and 
$$
X_{\C}= V(\mathcal{I}_{\R}(X,0)),
$$
where $\mathcal{I}_{\R}(X,0)$ is the ideal in $\mathbb{C}\{z_1,\ldots,z_n\}$ generated by the complexifications of all germs of real analytic functions that vanish on the germ $(X,0)$. We know that $X_{\C}$ is a germ of a complex analytic set and $\dim_{\C}X_{\C}=\dim_{\R}X$ (see \cite[Propositions 1 and 3, pp. 91-93]{Narasimhan:1966}). Then, for a linear projection $\pi:\C^{n}\to\C^d$ such that $\pi^{-1}(0)\cap C(X_{\C},0) =\{0\}$, there exists an open neighborhood $U\subset \C^n$ of $0$ such that $\# (\pi^{-1}(x)\cap (X_{\C}\cap U))$ is constant for a generic point $x\in \pi(U)\subset\C^d$. This number is the multiplicity of $X_{\C}$ at the origin and it is denoted by $m(X_{\C},0)$.

\begin{definition}
With the above notation, we define the multiplicity of $X$ at the origin by $m(X,0):=m(X_{\C},0)$.
\end{definition}

In the same way, we define the degree of a real algebraic set $A\subset \R^n$ by ${\rm deg}(A):={\rm deg}(A_{\C})$, where $A_{\C}=V(\mathcal{I}_{\R}(A))$ and $\mathcal{I}_{\R}(A)$ is the ideal in $\mathbb{C}[z_1,\ldots,z_n]$ generated by the complexifications of all real polynomials that vanish on  $A$.

\begin{definition}
We shall not distinguish between a $2(n-d)$-dimensional real linear subspace in $\C^n$ and its canonical image in the Grassmannian $G^{2n}_{2(n-d)}(\R)$. Thus, we regard the Grassmannian $G^{n}_{n-d}(\C)$ as a subset of $G^{2n}_{2(n-d)}(\R)$.
Let $\mathcal{E}(X_{\C})$ denote the subset of $G^{2n}_{2(n-d)}(\R)$ consisting of all $L\in G^{2n}_{2(n-d)}(\R)$ such that $L\cap C(X_{\C},0)=\{0\}$.
\end{definition}

\begin{remark}[\cite{Sampaio:2020b}]\label{transversal-cones}
We have the following comments on the set  $\mathcal{E}(X_{\C})$.
\begin{enumerate}
 \item [(i)]\label{transversal-cones-i} $\mathcal{E}(X_{\C})$ is an open dense set in $G^{2n}_{2(n-d)}(\R)\cong G^{2n}_{2d}(\R)$.
 \item [(ii)]\label{transversal-cones-ii} For each $L \in \mathcal{E}(X_{\C})\cap G^{n}_{n-d}(\C)$, let $\pi_L\colon \C^n\to L^{\perp}$ be the orthogonal projection along $L$. Then there exist a polydisc $U\subset \C^n$ and a complex analytic set $\sigma\subset U':=\pi_L(U)$ such that $\dim \sigma <\dim X_{\C}$ and $\pi_L\colon (U\cap X_{\C})\setminus \pi_L^{-1}(\sigma)\to U'\setminus \sigma$ is a $k$-sheeted cover with $k=m(X_{\C},0)$.
 \item [(iii)]\label{transversal-cones-iii} Since $\pi:=\pi_L$ is an $\R$-linear mapping, we identify the $d$-dimensional real linear subspace $\pi(\R^n)$ with $\R^d$ and, with this identification, we find that $\R^d\cap \sigma$ is a closed nowhere dense subset of $\R^d\cap U'$. Indeed, it is clear that $\R^d\cap \sigma$ is a closed subset of $\R^d\cap U'$, and thus if $\sigma$ is somewhere dense in $\R^d\cap U'$, then $\sigma$ contains an open ball  $B_r(p)\subset\R^d\cap U'$, which implies that $\sigma$ must contain a non-empty open subset of $U'$  a contradiction. Therefore, $\sigma$ is nowhere dense in $\R^d\cap U'$ and so $\R^d\cap U'\setminus \sigma$ is an open dense subset of $\R^d\cap U'$.
 \item [(iv)]\label{transversal-cones-iv} For a generic point $x\in \R^d$ near  the origin (i.e., for $x\in (\R^d\cap U')\setminus \sigma$), we have
\begin{eqnarray*}
m(X_{\C},0)&=&\# (\pi^{-1}(x)\cap (X_{\C}\cap U))\\
           &=&\# (\R^n\cap \pi^{-1}(x)\cap (X_{\C}\cap U))+\# ((\C^n\setminus \R^n)\cap \pi^{-1}(x)\cap (X_{\C}\cap U))\\
           &=&\# (\pi^{-1}(x)\cap (X\cap U))+\# (\pi^{-1}(x)\cap ((X_{\C}\setminus \R^n)\cap U)).
\end{eqnarray*}
Since for each $f\in \mathcal{I}_{\R}(X,0)$, we may write $f(z)=\sum \limits _{|I|=k}^{\infty}a_Iz^I$ with $a_I\in\R$ for all $I$, it follows that  $f(z_1,\ldots,z_n)=0$ if and only if $f(\bar z_1,\ldots,\bar z_n)=0$, where each $\bar z_i$ denotes the complex conjugate of $z_i$. In particular, $\# (\pi^{-1}(x)\cap ((X_{\C}\setminus \R^n)\cap U))$ is an even number. Therefore,
$m(X,0)\equiv \# (\pi^{-1}(x)\cap (X\cap U)) \,{\rm mod\,} 2$ for a generic point $x\in \R^d$ near  the origin. 
\end{enumerate}
\end{remark}

\begin{definition}
The mapping $\beta _n:\mathbb{S}^{n-1}\times \R^+\to \R^n$ given by $\beta_n(x,r)=rx$ is called the {\bf  spherical blowing-up} (at the origin) of $\R^n$.
\end{definition}
Note that $\beta _n:\mathbb{S}^{n-1}\times (0,+\infty )\to \R^n\setminus \{0\}$ is a homeomorphism with inverse $\beta_n^{-1}:\R^n\setminus \{0\}\to \mathbb{S}^{n-1}\times (0,+\infty )$ given by $\beta_n^{-1}(x)=(\frac{x}{\|x\|},\|x\|)$.
\begin{definition}
The {\bf strict transform} of the subset $X$ under the spherical blowing-up $\beta_n$ is $X':=\overline{\beta_n^{-1}(X\setminus \{0\})}$ and the {\bf boundary} $\partial X'$ of the {\bf strict transform} is $\partial X':=X'\cap (\mathbb{S}^{n-1}\times \{0\})$.
\end{definition}
Note that $\partial X'=C_X\times \{0\}$, where $C_X=C(X,0)\cap \mathbb{S}^{n-1}$.

\begin{definition}
Let $X\subset \R^n$ be a subanalytic set such that $0\in \overline X$ is a non-isolated point. We say that $x\in\partial X'$ is {\bf a simple point of $\partial X'$} if there is an open $U\subset \R^{n+1}$ with $x\in U$ such that:
\begin{itemize}
\item [a)] the connected components of $(X'\cap U)\setminus \partial X'$, say $X_1,\ldots, X_r$, are topological manifolds with $\dim X_i=\dim X$, $i=1,\ldots,r$;
\item [b)] $(X_i\cup \partial X')\cap U$ are topological manifolds with boundary. 
\end{itemize}
Let $\textup{Smp}(\partial X')$ be the set of simple points of $\partial X'$.
\end{definition}
\begin{definition}
Let $X\subset \R^n$ be a subanalytic set such that $0\in X$.
We define $k_X:\textup{Smp}(\partial X')\to \N$, by letting $k_X(x)$ be the number of connected components of the germ $(\beta_n^{-1}(X\setminus\{0\}),x)$.
\end{definition}
\begin{remark}
It is clear that the function $k_X$ is locally constant. In fact, $k_X$ is constant in each connected component $C_j$ of $\textup{Smp}(\partial X')$. Then, we define $k_X(C_j):=k_X(x)$ with $x\in C_j$.
\end{remark}
\begin{remark}\label{density_top}
By Theorems 2.1 and 2.2 in \cite{Pawlucki:1985},  $\textup{Smp}(\partial X')$ is an open dense subset of the $(d-1)$-dimensional part of $\partial X'$ whenever $\partial X'$ is a $(d-1)$-dimensional subset, where $d=\dim X$.
\end{remark}
% \begin{remark}
% The numbers $k_X(C_j)$ are equal to the numbers $n_j$ defined by Kurdyka and Raby \cite{Kurdyka:1989}, pp. 762.
% \end{remark}

\begin{definition}
Let $X\subset \R^{n}$ be a real analytic set. We denote by $C_X'$ the closure of the union of all connected components $C_j$ of $\textup{Smp}(\partial X')$ such that $k_X(C_j)$ is an odd number. We call $C_X'$ the {\bf odd part of $C_X\subset \mathbb{S}^{n}$}.
\end{definition}

\begin{definition}
Let $A\subset \mathbb{R}^{n}$, $B\subset \mathbb{R}^{d}$ and $C\subset A$ be subanalytic sets and $\pi\colon A\to B$ be a continuous mapping. If $\#(\pi^{-1}(x)\cap C)$ is constant ${\rm mod\,} 2$ for a generic $x\in B$, we define the {\bf degree of $C$ with respect to $\pi$} to be ${\rm deg}_{\pi}(C):=\#(\pi'^{-1}(x)\cap C)\ {\rm mod\,} 2$, for a generic $x\in B$.
\end{definition}

Let $X\subset \mathbb{R}^{n}$ be a subanalytic set. If ${\rm deg}_{\pi}(X)$ is defined and does not depend on a generic projection $\pi\colon \R^{n}\to \R^{d}$, then we denote ${\rm deg}_{\pi}(X)$ just by ${\rm deg}_2(X)$.

\begin{remark}
If $X\subset \R^n$ is an algebraic set then ${\rm deg}_2(X)$ is defined and ${\rm deg}_2(X)\equiv {\rm deg}(X)\, mod \, 2$.
\end{remark}

Proposition 2.14 in \cite{Sampaio:2021b} implies the following result:

\begin{proposition}\label{multcone}
Let $X\subset \R^{n}$ be a $d$-dimensional real analytic set with $0\in X$ and $\pi\colon \C^{n}\to \C^{d}$ be a projection such that $\pi^{-1}(0)\cap C(X_{\C},0)=\{0\}$. Let $\pi'\colon\mathbb{S}^n\setminus \pi^{-1}(0)\to \mathbb{S}^{d-1}$ be the mapping given by $\pi'(u)=\frac{\pi(u)}{\|\pi(u)\|}$. Then ${\rm deg}_{\pi'}(C_X')$ is defined and satisfies ${\rm deg}_{\pi}(C_X')\equiv m(X,0) \ {\rm mod\,}\,2$.
\end{proposition}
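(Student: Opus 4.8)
My plan is to reduce the asserted congruence to a count of real points in a generic fibre, and then to transport that count through the spherical blowing-up to the odd part of the tangent cone. The natural starting point is Remark \ref{transversal-cones}(iv), which already gives
\[
m(X,0)\equiv \#\bigl(\pi^{-1}(x)\cap (X\cap U)\bigr)\ {\rm mod\,}2
\]
for a generic real point $x\in \R^d$ near the origin. Writing such an $x$ in the form $x=tv$ with $v\in \mathbb{S}^{d-1}$ and $t>0$ small, it suffices to prove that for a generic direction $v$ one has $\#\bigl(\pi^{-1}(tv)\cap X\bigr)\equiv \#\bigl(\pi'^{-1}(v)\cap C_X'\bigr)\ {\rm mod\,}2$ for all small $t>0$. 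Since the right-hand side depends only on $v$ and the left-hand side is forced to equal the fixed number $m(X,0)\ {\rm mod\,}2$ for every generic $v$, the asserted well-definedness of ${\rm deg}_{\pi'}(C_X')$ will come out for free.

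Second, I would fix the genericity of $v$. Because $X$ has dimension $d$, the cone $C_X$, and hence $C_X'$, have dimension at most $d-1$, matching the dimension of the target sphere $\mathbb{S}^{d-1}$ of $\pi'$, so a degree count is meaningful. For generic $v$ I can simultaneously demand: that the ray $\ell_v=\{tv:t>0\}$ avoids the real trace of the discriminant $\sigma$ of Remark \ref{transversal-cones}(ii) for all small $t$ (legitimate since $\R^d\cap\sigma$ is nowhere dense by Remark \ref{transversal-cones}(iii)), which makes the real count locally constant in $t$; that $v$ is a regular value of $\pi'$ restricted to $C_X'$, so that $\pi'^{-1}(v)\cap C_X'$ is finite; and that these finitely many points lie in the simple-point locus $\textup{Smp}(\partial X')$, which is possible because that locus is open and dense in the $(d-1)$-dimensional part of $\partial X'$ by Remark \ref{density_top}. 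Having fixed such a $v$, each real point of $\pi^{-1}(tv)\cap X$ moves, as $t\to 0^+$, along a subanalytic arc $\gamma$ accumulating at the origin with a limiting direction $u=\lim_{t\to0^+}\gamma(t)/\|\gamma(t)\|\in C_X$; the transversality hypothesis $\pi^{-1}(0)\cap C(X_{\C},0)=\{0\}$ forces $\pi(u)\neq 0$, and from $\pi(\gamma(t))=tv$ one reads off $\pi'(u)=v$. Thus every such arc is attached to a direction $u\in \pi'^{-1}(v)\cap C_X$, and under $\beta_n$ it corresponds to a piece of the strict transform $X'$ meeting $\partial X'$ at the point $(u,0)$.

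Grouping the real preimages by their limiting direction, I would write $\#\bigl(\pi^{-1}(tv)\cap X\bigr)=\sum_{u} N(u,v)$, where $N(u,v)$ is the number of arcs accumulating at $u\in\pi'^{-1}(v)\cap C_X$. The heart of the matter is the local congruence $N(u,v)\equiv k_X(u)\ {\rm mod\,}2$: at a simple point the mod-$2$ number of real preimages arriving along $u$ equals the number $k_X(u)$ of connected components of the germ $(\beta_n^{-1}(X\setminus\{0\}),(u,0))$. The mechanism is that over a simple point each such connected component is a $d$-dimensional topological manifold with boundary on $\partial X'$, and a relative mod-$2$ degree computation shows that, for generic $v$, it contributes an \emph{odd} number of preimages of the short ray $\ell_v$; summing over the components yields the congruence. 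This local statement is precisely what Proposition 2.14 of \cite{Sampaio:2021b} supplies. Granting it, reduction modulo $2$ annihilates every direction $u$ with $k_X(u)$ even and leaves a single contribution for each $u$ with $k_X(u)$ odd, so that
\[
\#\bigl(\pi^{-1}(tv)\cap X\bigr)\equiv \sum_{u}k_X(u)\equiv \#\bigl\{u\in\pi'^{-1}(v)\cap C_X:\ k_X(u)\ \text{odd}\bigr\}\ {\rm mod\,}2.
\]
By the definition of the odd part, this last set is exactly $\pi'^{-1}(v)\cap C_X'$, which finishes the reduction and, combined with the first paragraph, gives $m(X,0)\equiv {\rm deg}_{\pi'}(C_X')\ {\rm mod\,}2$.

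I expect the main obstacle to be the local congruence $N(u,v)\equiv k_X(u)\ {\rm mod\,}2$: one must control the behaviour of the arcs near $\partial X'$, justify that each connected component of the strict-transform germ at a simple point contributes an odd number of real preimages of a generic short ray (a relative mod-$2$ degree argument), and arrange the genericity of $v$ so that no arc escapes to a non-simple point of $\partial X'$ or into the discriminant. This is the delicate step, and it is exactly the point at which the cited Proposition 2.14 of \cite{Sampaio:2021b} does the real work; the remaining parts of the argument are bookkeeping with the definitions and with Remark \ref{transversal-cones}.
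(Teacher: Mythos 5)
Your proposal is sound and coincides with the paper's treatment: the paper offers no independent argument for Proposition \ref{multcone}, deriving it immediately from Proposition 2.14 of \cite{Sampaio:2021b}, which is exactly the result you invoke for the key local congruence $N(u,v)\equiv k_X(u)\ {\rm mod\,}2$. The surrounding reduction you supply (Remark \ref{transversal-cones}(iv), the genericity conditions on $v$, and the arc-by-arc bookkeeping through the spherical blowing-up) is the natural expansion of what the paper leaves implicit in that citation.
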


\section{Homological cycles}
Let $M$ be a smooth compact manifold of (real) dimension $n$.
Given homology classes $\alpha \in H_k(M)$ and $\beta\in H_{n-k}(M)$, we choose representative cycles $\tilde\alpha$ and $\tilde\beta$, respectively.  We can assume that every singular simplex appearing in each of these cycles is a smooth mapping and also that any two simplices meet transversally. This means that the only points of intersection are where the interior of a $k$-simplex in $\tilde\alpha$ meets the interior of an $(n-k)$-simplex in $\tilde\beta$.  At every such point $x$ of intersection both $\tilde\alpha$ and $\tilde\beta$ are local embeddings and their tangent spaces are complementary in $T_xM.$  We assign a sign to each point of intersection by comparing the direct sum of the orientations of the tangent spaces of $\tilde\alpha$ and of $\tilde\beta$ with the ambient orientation of the tangent space of $M.$  The sum of the signs over the (finitely many) points of intersection gives the intersection pairing applied to $(\alpha,\beta).$

If $M=\mathbb{P}^n(\C)$, then $H_{2i}(M,\mathbb{Z})=\mathbb{Z}$ for $i=0,1,\ldots,n$ and $H_{2i-1}(M,\mathbb{Z})=0.$ The space $H_{2i}(M,\mathbb{Z})$ is generated by the class
$L^{n-i}$ where $L$ is a hyperplane, and we consider it as an algebraic cycle. Hence every $2i$-dimensional homological cycle $\alpha$ can be described as $dL^{n-i}.$ We say that the number $|d|$ is {\bf the topological degree of $\alpha$}. Note that if $X\subset M$ is an $i$-dimensional projective subvariety, then the algebraic degree of $X$ coincides with the topological degree.

Similarly, if $M=\mathbb{P}^n(\R)$, then $H_{i}(M,\mathbb{Z}/(2))=\mathbb{Z}/(2)$ for $i=0,1,\ldots,n$. The space $H_{i}(M,\mathbb{Z}/(2))$ is generated by the class $L^{n-i}$ where $L$ is a hyperplane and we consider it as an algebraic cycle. Hence every $i$-dimensional homological cycle $\alpha$ can be described as $dL^{n-i}.$ We say that the number $d$ is {\bf the topological degree mod 2 of $\alpha$}. Note that if $X\subset M$ is an $i$-dimensional projective subvariety, then the algebraic degree mod 2 coincides with the topological degree.

Let $R=\Bbb Z$ or $R=\Bbb Z/(2).$ Let $X$ be a compact semi-algebraic set of dimension $d.$ We say that $X$ is a homological cycle over $R$, if there exists a stratification $\mathcal{S}$ of $X$ such that it gives on $X$ a structure of a $R$-homological $d$-cycle $\alpha$. We say that this cycle is orientable if $R=\Bbb Z$ and  $[\alpha]\not=0$ in $H_d(X,\Bbb Z).$ It is well known that if $X\subset\Bbb P^n(\C)$ is an irreducible algebraic variety, then it is an orientable homological cycle.

The next result generalizes Theorem 3.4 in \cite{Jelonek:2021}.

\begin{theorem}\label{inv_degree_homo_cycle}
Let $X\subset \C^n,Y\subset \C^m$ be complex algebraic varieties of  dimension $d$ and let $h\colon X\to Y$ be a semialgebraic and semi-bi-Lipschitz homeomorphism. Assume that the closure of ${\rm graph}(h)$ in $\mathbb{P}^{n+m}(\C)$ is an orientable homological cycle. Then
${\rm deg} (X)= {\rm deg } (Y).$
\end{theorem}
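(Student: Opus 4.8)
The plan is to reduce the degree equality to a statement about topological degrees of homological cycles in projective space, exploiting the hypothesis that the closure of $\mathrm{graph}(h)$ is an orientable homological cycle. Write $\Gamma\subset\C^n\times\C^m$ for $\mathrm{graph}(h)$ and let $\overline\Gamma\subset\mathbb{P}^{n+m}(\C)$ be its closure, a $d$-dimensional orientable homological cycle by assumption. The two projections $p\colon\C^n\times\C^m\to\C^n$ and $q\colon\C^n\times\C^m\to\C^m$ restrict to maps $\Gamma\to X$ and $\Gamma\to Y$; since $\phi\colon x\mapsto(x,h(x))$ is a semi-bi-Lipschitz homeomorphism (Lemma \ref{wykres}), each of these restrictions is a semialgebraic homeomorphism onto $X$ and $Y$ respectively, and in particular is generically one-to-one. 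The strategy is to compute $\deg(X)$ and $\deg(Y)$ each as the topological degree of $\overline\Gamma$ measured by a suitable generic linear projection, and to conclude that both equal the topological degree of $\overline\Gamma$ as a homological cycle, hence coincide.

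First I would recall from Section 3 that for a $d$-dimensional projective subvariety the algebraic degree equals the topological degree of its fundamental class $dL^{n-i}$, i.e.\ the intersection number with a generic complementary linear subspace. So I would set $\deg(X)=\overline X\cdot H$ for a generic linear $H\subset\mathbb{P}^n(\C)$ of complementary dimension, and similarly for $Y$. The key geometric input is Lemma \ref{lemat}: semi-bi-Lipschitz-ness at infinity of a projection $\pi_L$ is equivalent to the center $L$ avoiding the closure $\Sigma$ of the secant directions. Since $p|_\Gamma$ and $q|_\Gamma$ are semi-bi-Lipschitz (being essentially the inverses of $\phi$ composed with $h$), the relevant projections of $\overline\Gamma$ from the two coordinate centers at infinity are well-controlled; in particular the projection away from $\C^m$ (resp.\ $\C^n$) maps $\overline\Gamma$ finitely and generically bijectively onto $\overline X$ (resp.\ $\overline Y$). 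This lets me identify the pushforward of the homology class $[\overline\Gamma]$ under each coordinate projection with $[\overline X]$ and $[\overline Y]$ respectively, each with multiplicity one because the maps are generically one-to-one and orientation is preserved (this is where orientability of the cycle is used).

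Then I would compute the topological degree of $[\overline\Gamma]$ by intersecting with a generic linear subspace of $\mathbb{P}^{n+m}(\C)$ of complementary dimension, chosen to be pulled back from $\mathbb{P}^n(\C)$ under $p$, and separately one pulled back from $\mathbb{P}^m(\C)$ under $q$. By the projection formula for the intersection pairing (using that $p_\ast[\overline\Gamma]=[\overline X]$ and $q_\ast[\overline\Gamma]=[\overline Y]$), the first computation yields the topological degree of $[\overline X]$, i.e.\ $\deg(X)$, and the second yields $\deg(Y)$. Since both equal the single well-defined topological degree $|d|$ of the cycle $[\overline\Gamma]$, we conclude $\deg(X)=\deg(Y)$.

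The main obstacle, and the step requiring the most care, is justifying that the pushforwards of the fundamental class of $\overline\Gamma$ under the two coordinate projections are exactly $[\overline X]$ and $[\overline Y]$ with multiplicity one. There are two subtleties: first, the generic bijectivity of $p|_\Gamma$ and $q|_\Gamma$ must be promoted to a degree-one statement at the level of homology classes, which needs the orientability hypothesis ($R=\Z$, $[\alpha]\neq0$ in $H_d$) so that the semialgebraic map preserves orientation and the signed count of generic fibers is $+1$ rather than merely odd. Second, and more delicate, one must control the behavior of $\overline\Gamma$ at the hyperplane at infinity: the projections from the coordinate centers must be shown to be finite near the relevant generic linear sections, which is precisely what Lemma \ref{lemat} guarantees through $L\cap\Sigma_0=\emptyset$, and I would invoke it to rule out that the generic intersection with the complementary subspace picks up spurious points lying over infinity. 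Once these two points are secured, the projection formula delivers the conclusion immediately.
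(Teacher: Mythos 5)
Your outline follows the same route as the paper's proof: push the class of $\overline{\Gamma}$ forward to the fundamental class of $\overline{X}$, compute the topological degree of the cycle by intersecting with linear subspaces pulled back under the two coordinate projections, and use Lemma \ref{lemat} to keep those intersections away from the hyperplane at infinity. However, at the step you yourself single out as the crux --- that $p_*[\overline{\Gamma}]=[\overline{X}]$ with multiplicity one --- your justification does not go through as stated, and this is a genuine gap. The hypothesis that $\overline{\Gamma}$ is an orientable homological cycle says only that its class $[\alpha]$ is nonzero in $H_{2d}(\overline{\Gamma},\Z)$; it says nothing about $p$ ``preserving orientation,'' and generic injectivity of $p|_{\Gamma}$ alone does not yield a degree-one statement in homology: a priori the cycle could push forward to $m\beta$ with $|m|\ge 2$ (the chain may carry multiplicities), or the class could a priori be killed by the part at infinity. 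The paper closes this gap with two concrete arguments. First, setting $A=\overline{X}\setminus X$ and $B=G\setminus \mathrm{graph}(h)$, the homeomorphism $G\setminus B\cong \overline{X}\setminus A$ gives $H_{2d}(G,B)\cong H_{2d}(\overline{X},A)=\Z$ (via \cite{lam}), and since $H_{2d}(B)=0$ and $H_{2d}(A)=H_{2d-1}(A)=0$ (these sets have real dimension at most $2d-1$, resp.\ $2d-2$), a ladder of long exact sequences of the pairs shows $p_*[\alpha]=m\beta\neq 0$. Second, intersecting with a generic $L^{n-d}$ transversal to $X$ gives $|m|\,\mathrm{deg}\,X=|p_*[\alpha]\cdot L|\le \mathrm{deg}\,X$, forcing $|m|=1$, and after reorienting $\alpha$ one gets $m=1$. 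Neither argument appears in your sketch, and ``orientability $\Rightarrow$ multiplicity one'' is not a substitute for them.

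A second, smaller omission: your appeal to the projection formula conceals a sign verification that the paper carries out explicitly. Even granting $p_*[\alpha]=\beta$, one must check that every point of the transversal intersection $p^{-1}(L)\cap G$ contributes $+1$ once $G$ carries the orientations induced from $\overline{X}$; the paper proves this by the determinant computation in which the orientation determinant of $v_1,\ldots,v_{2m};w_1,\ldots,w_{2n-2d};u_1',\ldots,u_{2d}'$ factors as a product whose first factor is the canonical (positive) orientation of the complex kernel $M$ --- this is precisely where the complex structure enters, and it is needed because the projection formula on a singular projective variety is not an off-the-shelf tool here. Finally, to apply $p$ to $G$ at all one needs the center of the projection to be disjoint from $G$, which again follows from the semi-bi-Lipschitz hypothesis via Lemma \ref{lemat} and should be stated. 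So: right strategy, essentially the paper's, but the multiplicity-one claim and the sign bookkeeping are asserted where they need to be proved.
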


\begin{proof}
Let $G$ be the closure of $\textup{graph}(h)$ in $\mathbb{P}^{n+m}(\C)$ and $\overline X$ be the  closure of $X$ in $\mathbb{P}^{n}(\C)$.
Let $\alpha$ is a homological cycle of $G.$ First we show that $p_*(\alpha)$ is a fundamental class of $\overline{X},$ where $p:G\to \overline{X}$ is a projection. Denote $A=\overline{X}\setminus X$ and $B=G\setminus {\rm graph}(h).$ Since $\overline{X}\setminus A$ is homeomorphic to $G\setminus B$ we have isomorphism $H(\overline{X},A)\cong H(G,B)$ (see \cite{lam}). Let us consider the following diagram:

\[\xymatrixrowsep{0.3in}
\xymatrixcolsep{0.08in}
\xymatrix{
    0=H_{2d}(B)\ar[d]\ar[r]
        & H_{2d}(G)\ar[d]\ar[r]
        & H_{2d}(G,B)\ar[d]\ar[r] &  H_{2d-1}(B)\ar[d]\\
    0=H_{2d}(A) \ar[r]
        & H_{2d}(\overline{X}) \ar[r]
        & H_{2d}(\overline{X}, A)\ar[r] & H_{2d-1}(A)=0\\ }\]

Since $\overline{X}$ is a projective variety we have $H_{2d}(X,\Bbb Z)=\Bbb Z$ and $H_{2d-1}(A,\Bbb Z)=0.$ In particular $H_{2d}(\overline{X},A,\Bbb Z)=\Bbb Z.$ Since $[\alpha]\not=0$, the diagram above shows that $[p_*([\alpha])=m\beta\not=0.$ Here $\beta$ denotes the funadamental class of $\overline{X}.$ Changing the orientation of $\alpha$ if necessary, we can assume that $m>0.$ Let $L^{n-d}$ be a
linear space which cuts $X$ transversally such that $\#L\cap X={\rm deg}\ X$. It is easy to see that $|p_*([\alpha])\cdot L|\le {\rm deg}\ X.$
This implies that $m=1$ and consequently $p_*([\alpha])=\beta.$

Hence we can take the orientations of $2d$-dimensional simplices in $G$ to be the
same as in $\overline X$ (using a projection $p\colon G\to \overline X)$. Now if a
linear space meets $X$ transversally with $\#L\cap X={\rm deg}\ X$ then also $p^{-1}(L)$ meets $G$ transversally and it is easy to see that the intersection number is $+1$. Indeed, let $M=\textup{Ker} (\pi)$ where $\pi\colon \C^n\times \C^m\to \C^n$ is the projection. Hence $\pi^{-1}(L)=L\times M.$
Fix a canonical orientation on $M$ given by vectors $v_1,\ldots,v_{2m}$, on $L$ given by $w_1,\ldots, w_{2n-2d}$ and on a $2d$-dimensional simplex $\sigma$ in $X$ - $u_1,\ldots,u_{2d}.$ Let $\sigma'$ be a simplex in $G$ such that $\pi(\sigma')=\sigma$ and let $u_1',\ldots,u_{2d}'$ be its orientation at a generic point such that $\pi(u_i')=u_i.$ In particular $u_i'=u_i+\sum_{j=1}^{2m} a_{ij} v_j.$ Hence the cycles $L$ and $X$ at a point of intersection have orientation given by
  $w_1,\ldots,w_{2n-2d}$ , $u_1,\ldots,u_{2d}.$ The cycles $\pi^{-1}(L)$ and $G$ have orientation given by
   $v_1,\ldots,v_{2m}$,  $w_1,\ldots, w_{2n-2d}$ , $u_1',\ldots,u_{2d}'$ (note that $\pi^{-1}(L)=L\times M$). Now it is easy to see that the determinant of the vectors  $v_1,\ldots,v_{2m};w_1,\ldots, w_{2n-2d};u_1',\ldots,u_{2d}'$ is  equal to the product of the determinant of $v_1,\ldots,v_d$
   and the determinant of the  $w_1,\ldots,w_{2n-2d}; u_1,\ldots,u_{2d}$. Consequently, the sign of these two determinant is the same. This means
   that the  orientation of the vectors 
  $w_1,\ldots,w_{2n-2d}$ , $u_1,\ldots,u_{2d}$ is the same as the orientation  of  $v_1,\ldots,v_{2m}$,  $w_1,\ldots, w_{2n-2d}$ , $u_1',\ldots,u_{2d}'$.

Moreover by Lemma \ref{lemat} the set $\overline{\pi^{-1}(L)}\cap G$ has no points at infinity, because otherwise $\overline{L}\cap \overline{X}$ would have points at infinity (the center of the projection is disjoint from $G$).

This means that the
topological degree of $G$ coincides with the algebraic degree of $X$. The same holds for $Y$. Hence ${\rm deg} (X)= {\rm deg} (Y)$.
\end{proof}

In the same way (in fact the proof is simpler, because we do not have to control the orientation) we have:

\begin{theorem}\label{Ztwo_cycle}
Let $X\subset \R^n,Y\subset \R^m$ be real algebraic sets and let $h\colon X\to Y$ be a semialgebraic and semi-bi-Lipschitz homeomorphism. Assume that the projective closure of the graph of $h$ is a $\mathbb{Z}/(2)$ homological cycle. Then
$\textup{deg} (X)= \textup{deg} (Y) \ \textup{mod} \ 2.$
\end{theorem}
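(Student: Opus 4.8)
\emph{Proof proposal.} The plan is to follow the argument of Theorem \ref{inv_degree_homo_cycle}, but with $\Z/(2)$-coefficients throughout and in the real projective spaces, where $H_{i}(\mathbb{P}^{k}(\R),\Z/(2))=\Z/(2)$ for every $0\le i\le k$. Write $G$ for the projective closure of ${\rm graph}(h)$ in $\mathbb{P}^{n+m}(\R)$, let $\alpha$ be the $\Z/(2)$-homological $d$-cycle carried by $G$, let $\overline{X}\subset \mathbb{P}^{n}(\R)$ be the closure of $X$, and let $p\colon G\to \overline{X}$ be the projection forgetting the last $m$ coordinates (it is defined on $G$, since $G$ avoids the center of the projection). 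First I would prove that $p_{*}[\alpha]$ equals the mod $2$ fundamental class $\beta$ of $\overline X$; then I would identify the topological degree mod $2$ of $G$ first with ${\rm deg}(X)\ {\rm mod}\ 2$ and then, symmetrically, with ${\rm deg}(Y)\ {\rm mod}\ 2$, forcing the two degrees to agree.

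For the first step, set $A=\overline{X}\setminus X$ and $B=G\setminus{\rm graph}(h)$. Both are semialgebraic of dimension $<d$, so (with $\Z/(2)$-coefficients) $H_{d}(A)=H_{d}(B)=0$ and the natural maps $H_{d}(\overline X)\to H_{d}(\overline X,A)$ and $H_{d}(G)\to H_{d}(G,B)$ are injective. Since $p$ restricts to a homeomorphism $G\setminus B={\rm graph}(h)\to X=\overline X\setminus A$, it induces an isomorphism $H_{*}(G,B)\cong H_{*}(\overline X,A)$, and the resulting ladder of long exact sequences commutes exactly as in Theorem \ref{inv_degree_homo_cycle}. As $[\alpha]\neq0$ and $H_{d}(B)=0$, the image of $[\alpha]$ in $H_{d}(G,B)$ is nonzero; transporting it across the isomorphism and invoking injectivity of $H_{d}(\overline X)\to H_{d}(\overline X,A)$ yields $p_{*}[\alpha]\neq0$. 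This is the one honest simplification over the complex case: because $H_{d}(\overline X,\Z/(2))=\Z/(2)$, a nonzero class is automatically the generator, so $p_{*}[\alpha]=\beta$ with no need to rule out a multiplicity $m>1$ through a separate transversality estimate.

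For the second step, recall that the topological degree mod $2$ of $G$ is the mod $2$ intersection number of $[\alpha]$ with any generic linear subspace of complementary dimension $n+m-d$. Pick $L\subset \mathbb{P}^{n}(\R)$ of dimension $n-d$ meeting $\overline X$ transversally, so that $\#(X\cap L)\equiv {\rm deg}(X)\ {\rm mod}\ 2$; then, with $M$ the kernel of the underlying linear projection $\R^{n+m}\to\R^{n}$, the set $L\times M$ is such a complementary subspace. By Lemma \ref{wykres} the projection $p$ is semi-bi-Lipschitz, so Lemma \ref{lemat} guarantees that $\overline{L\times M}\cap G$ has no point at infinity, and since $p|_{{\rm graph}(h)}$ is a bijection onto $X$, this intersection is the transversal finite set $\{(x,h(x)):x\in X\cap L\}$. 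Hence the topological degree mod $2$ of $G$ equals $\#(X\cap L)\equiv {\rm deg}(X)\ {\rm mod}\ 2$. Carrying out the identical computation with the projection $q\colon G\to \overline Y$ onto the last $m$ coordinates and a generic $L'\subset \mathbb{P}^{m}(\R)$ transversal to $\overline Y$ — legitimate because $q|_{{\rm graph}(h)}$ is semi-bi-Lipschitz, $h^{-1}$ being semi-Lipschitz — shows the same topological degree mod $2$ of $G$ equals ${\rm deg}(Y)\ {\rm mod}\ 2$. Since both numbers compute the single topological degree mod $2$ of $G$, we conclude ${\rm deg}(X)\equiv {\rm deg}(Y)\ {\rm mod}\ 2$.

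The step I expect to require the most care — the only place where the real, $\Z/(2)$ setting is genuinely different rather than merely lighter — is the homological bookkeeping of the fundamental class: one must know that $H_{d}(\overline X,\Z/(2))=\Z/(2)$ is generated by a mod $2$ fundamental class whose intersection with a generic complementary linear space computes ${\rm deg}(X)\ {\rm mod}\ 2$, and that $[\alpha]\neq0$, which is precisely the hypothesis that $G$ is a $\Z/(2)$-homological cycle. This rests on the theory of fundamental classes of real algebraic varieties (Borel--Haefliger) together with the lower-dimensionality of $A$ and $B$ established above. Everything concerning transversality and the absence of intersection points at infinity is as in Theorem \ref{inv_degree_homo_cycle}, reduced to Lemma \ref{lemat}; the orientation and determinant computation of that proof is simply discarded, since over $\Z/(2)$ each transversal intersection point contributes $1$.
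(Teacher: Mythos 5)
Your proposal takes exactly the route the paper intends: the paper's entire proof of Theorem \ref{Ztwo_cycle} is the remark that the argument of Theorem \ref{inv_degree_homo_cycle} goes through with $\Z/(2)$ coefficients and is simpler because no orientations need to be controlled. Your second step is precisely that argument: the class of $\alpha$ in $H_d(\mathbb{P}^{n+m}(\R),\Z/(2))\cong\Z/(2)$ has a single topological degree mod $2$, computed by intersecting with a generic linear space of complementary dimension; taking $L\times M$ (resp.\ $L'\times M'$), using Lemma \ref{lemat} together with Lemma \ref{wykres} (applied to $h$, resp.\ $h^{-1}$) to exclude intersection points at infinity, and using that $p|_{{\rm graph}(h)}$ (resp.\ $q|_{{\rm graph}(h)}$) is a bijection onto $X$ (resp.\ $Y$), each transversal point contributes $1$ mod $2$, and the two evaluations give ${\rm deg}(X)\equiv{\rm deg}(Y)\ {\rm mod}\ 2$.

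However, one assertion in your step $1$ is false: it is not true that $H_d(\overline X,\Z/(2))=\Z/(2)$ for a real algebraic set, not even an irreducible one. For instance, the projective closure of the elliptic curve $y^2=x^3-x$ is homeomorphic to a disjoint union of two circles, so $H_1(\overline X,\Z/(2))=(\Z/(2))^2$; unlike the complex case, where an irreducible projective variety is connected with $H_{2d}(\overline X,\Z)=\Z$, real varieties can be disconnected (or otherwise topologically complicated) in top dimension, and Borel--Haefliger gives the \emph{existence} of a mod $2$ fundamental class, not that it generates all of $H_d(\overline X,\Z/(2))$. So ``nonzero class is automatically the generator'' fails, and $p_*[\alpha]=\beta$ does not follow from your ladder argument. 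The good news is that this step is never used: in the complex proof its only purpose is to transport orientations from $\overline X$ to $G$ so that every intersection point counts $+1$, and over $\Z/(2)$ that issue evaporates. Your step $2$ works entirely with the image of $[\alpha]$ in the ambient $H_d(\mathbb{P}^{n+m}(\R),\Z/(2))$ and never invokes the fundamental class of $\overline X$; indeed you do not even need $[\alpha]\neq 0$, since if the ambient class vanishes both degrees are even. Delete step $1$ (and the corresponding sentence in your final paragraph), and what remains is a correct proof that coincides with the paper's.
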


\begin{remark}
{\rm In fact  more general statements  are true.  It is easy to see that our proof works if $h$ is semi-bi-Lipschitz at infinity. Indeed, under this assumption, we still have $\overline{\Gamma}\setminus \Gamma \subset \Sigma_0$, where $\Gamma=\textup{graph}(h).$  Moreover, there are sufficiently general linear subspaces $L$ which omit $K$ (or $K'$). }
\end{remark}

\begin{definition}
An $(n-1)$-dimensional subanalytic set $C$ is said to be an {\bf Euler cycle} if it is a closed set and if, for a stratification of $C$ (and hence for any that refines it), the number of $(n-1)$-dimensional strata containing a given $(n-2)$-dimensional stratum in their closure is even.
\end{definition}

\begin{definition}
We say that a set $C \subset \mathbb{R}^n$ is {\bf $a$-invariant} if it is preserved 
by the antipodal mapping (i.e. $a(C) = C$, with $a(x) = -x$).
\end{definition}

\begin{remark}
The closure in $\mathbb{P}^n(\R)$ of an $a$-invariant Euler cycle $C\subset \R^n$ is a $\mathbb{Z}/(2)$ homological cycle.
\end{remark}

We finish this section by recalling the following two definitions:

\begin{definition}
Let $M$ and $N$ be analytic manifolds. Let $X\subset M$ and $Y\subset N$ be analytic subsets. We say that a mapping $f\colon X\to Y$ is {\bf arc-analytic} if for any analytic arc $\gamma\colon (-1,1)\to X$, the mapping $f\circ \gamma$ is an analytic arc as well.
\end{definition}

\begin{definition}(\cite{PA})
We say that $E\subset \mathbb{P}^N(\R)$ is {\bf arc-symmetric} if for any analytic arc $\gamma\colon (-1,1)\to \mathbb{P}^N(\R)$ such that $\gamma((-1,0))\subset E$, we have $\gamma((0,\epsilon))\subset E$, for some $\epsilon>0.$
\end{definition}

\section{Proof of Fukui-Kurdyka-Paunescu's Conjecture}
We start this section by remarking that we cannot expect invariance of multiplicity without mod 2 in Fukui-Kurdyka-Paunescu's Conjecture, as we can see in the next example:

\begin{example}
Consider $X=\{(x,y,z)\in \R^3; z(x^2+y^2)=y^3\}$ and $Y=\{(x,y,z)\in \R^3; z(x^4+y^4)=y^5\}$. Let $h\colon (\R^3,0)\to (\R^3,0)$ be the mapping given by 
$$
h(x,y,z)=\left\{\begin{array}{ll}
                 \left(x,y,z-\frac{y^3}{x^2+y^2}+\frac{y^5}{x^4+y^4}\right)& \mbox{ if }x^2+y^2\not =0\\
                 (0,0,z)& \mbox{ if }x^2+y^2=0.
                \end{array}\right.
$$
Then $X$ and $Y$ are irreducible real analytic sets such that $m(X,0)=3$ and $m(Y,0)=5$. Moreover, $h$ is a semialgebraic arc-analytic bi-Lipschitz homeomorphism such that $h(X)=Y$.
\end{example}

The next result gives a positive answer to Fukui-Kurdyka-Paunescu's Conjecture.
\begin{theorem}\label{proof_FKP-conjecture}
Let $(X,0)\subset (\R^n,0), (Y,0)\subset (\R^m,0)$ be germs of real analytic sets and let $h\colon (X,0)\to (Y,0)$ be a subanalytic arc-analytic bi-Lipschitz homeomorphism. Then
$m(X,0)\equiv m(Y,0) \,{\rm mod\,} 2$.
\end{theorem}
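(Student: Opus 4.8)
The plan is to transfer the whole problem to the tangent cones and reduce it to the mod $2$ degree invariance established in Section~3. Write $d=\dim X=\dim Y$; since a bi-Lipschitz homeomorphism of germs preserves dimension, this is legitimate. By Proposition~\ref{multcone}, choosing generic projections one has $m(X,0)\equiv{\rm deg}_{\pi}(C_X')$ and $m(Y,0)\equiv{\rm deg}_{\pi}(C_Y')\,{\rm mod}\,2$, where $C_X'\subset\mathbb{S}^{n-1}$ and $C_Y'\subset\mathbb{S}^{m-1}$ are the odd parts of the tangent cones. Hence it suffices to prove that the mod $2$ degrees of $C_X'$ and $C_Y'$ (equivalently, of the algebraic cones $\textup{Cone}(C_X')$ and $\textup{Cone}(C_Y')$) coincide.

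First I would build a geometric correspondence between $C_X'$ and $C_Y'$. Since $h$ is subanalytic, Lipschitz and bi-Lipschitz, the pseudo-derivative $d_0h\colon C(X,0)\to C(Y,0)$ is a positively homogeneous bi-Lipschitz homeomorphism whose inverse is $d_0(h^{-1})$. Lifting $h$ through the spherical blowing-ups $\beta_n$ and $\beta_m$ produces a homeomorphism $\tilde h\colon X'\to Y'$ of strict transforms which carries $\beta_n^{-1}(X\setminus\{0\})$ onto $\beta_m^{-1}(Y\setminus\{0\})$ and restricts on the boundary to $\partial X'\to\partial Y'$, $(u,0)\mapsto(d_0h(u)/\|d_0h(u)\|,0)$; the continuous extension to the boundary is precisely the existence of the pseudo-derivative. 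Because $\tilde h$ is a homeomorphism of the pair $(X',\partial X')$ taking interior to interior, it sends simple points to simple points and preserves the local component count, i.e. $k_Y\circ\tilde h=k_X$ on $\textup{Smp}(\partial X')$. Therefore $\tilde h$ carries the union of the odd components onto the union of the odd components and, passing to closures, $\tilde h(C_X')=C_Y'$. (Here Remark~\ref{density_top} guarantees that $C_X'$ and $C_Y'$ are genuinely $(d-1)$-dimensional, so the counts above are meaningful.)

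The point where arc-analyticity becomes essential is the passage from this homeomorphism to an equality of degrees: a bare bi-Lipschitz homeomorphism of cones need not preserve the degree mod $2$. I would use that the pseudo-derivative of an arc-analytic map is again arc-analytic, so the graph of $d_0h$ is traced analytically along every analytic arc; consequently the graph of the restriction $d_0h\colon\textup{Cone}(C_X')\to\textup{Cone}(C_Y')$ is arc-symmetric, and its projective closure is a $\mathbb{Z}/(2)$-homological cycle in the sense of Section~3. The pseudo-derivative is moreover semi-bi-Lipschitz (being homogeneous and bi-Lipschitz at the vertex), so by Lemma~\ref{lemat} the closure of this graph acquires no superfluous points at infinity for a generic linear section. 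These are exactly the hypotheses of Theorem~\ref{Ztwo_cycle} (applied at infinity, as in the Remark following it, or by re-running the intersection-theoretic computation of Theorem~\ref{inv_degree_homo_cycle} mod $2$), which then yields ${\rm deg}(\textup{Cone}(C_X'))\equiv{\rm deg}(\textup{Cone}(C_Y'))\,{\rm mod}\,2$. Combining with the first paragraph gives $m(X,0)\equiv m(Y,0)\,{\rm mod}\,2$.

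The main obstacle, as the above makes clear, is this third step: establishing rigorously that arc-analyticity of $h$ promotes the graph of the pseudo-derivative on the odd parts to an object carrying a $\mathbb{Z}/(2)$-fundamental class — that is, an arc-symmetric (hence $\mathcal{AS}$-, and in particular semialgebraic) set to which the homological degree argument of Section~3 applies — and that the restriction of $d_0h$ to the cones over $C_X'$ and $C_Y'$ satisfies every hypothesis of Theorem~\ref{Ztwo_cycle}, including the semialgebraicity of $d_0h$ on the cones. By contrast, the reduction to tangent cones via Proposition~\ref{multcone} and the preservation of the parity function $k$ under the strict-transform homeomorphism are comparatively formal, being respectively a direct application of the cited proposition and the topological invariance of connected-component counts.
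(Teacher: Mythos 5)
Your overall architecture coincides with the paper's: reduce to the odd parts of the tangent cones via Proposition \ref{multcone}, show that the graph of the pseudo-derivative restricted to these odd parts carries a $\mathbb{Z}/(2)$ fundamental class, and transfer mod $2$ degrees through that graph (the paper's final Claim does exactly your "no superfluous points at infinity" bookkeeping, proving $\tilde\pi^{-1}(0)\cap\Gamma=\{0\}$ from the Lipschitz bound and comparing generic fibers, then swapping $\Gamma={\rm graph}(\psi)$ with ${\rm graph}(\psi^{-1})$). But the step you yourself flag as the main obstacle is a genuine gap, and the mechanism you propose for closing it does not work. Theorem \ref{Ztwo_cycle} is inapplicable here: it requires $X$ and $Y$ to be \emph{real algebraic} sets and $h$ \emph{semialgebraic}, whereas $\textup{Cone}(C_X')$ is only a subanalytic cone --- the odd part is cut out by the parity of the component-counting function $k_X$, a metric-topological selection with no algebraic structure --- and $d_0h$ is merely subanalytic, since $h$ is. The arc-symmetry route fails for the same reason at the definitional level: the $\mathcal{AS}$ category of \cite{PA} invoked in Theorem \ref{thm:arc-symmetric} consists of semialgebraic sets (you concede this in your parenthetical), so the graph of $d_0h$ cannot be shown arc-symmetric in that sense; moreover "the pseudo-derivative of an arc-analytic map is arc-analytic" does not follow by composition, because an analytic arc in $C(X,0)$ is not an arc in $X$ and need not lift to one, so arc-analyticity of $h$ gives no direct control over $d_0h$ along arcs in the cone.

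What the paper actually does at this juncture is different and is the heart of its proof. It establishes three facts: (i) $C_X'$ is an Euler cycle, by Proposition 2.2 of \cite{Valette:2010} --- a property of $X$ alone, with no input from $h$; (ii) $C_X'$ is $a$-invariant (Claim \ref{a-invariant}), proved by slicing $X$ along generic lines through a simple point direction and using Milnor's local structure of real analytic curves \cite{Milnor:1968}: oddness of $k_X(v)$ forces an odd number of branches whose two half-tangents are opposite, whence $-v\in C_X'$; (iii) arc-analyticity of $h$ is used exactly once, to make $\psi=d_0h|_{C'(X,0)}$ \emph{odd}, $\psi(-v)=-\psi(v)$, since the two half-branches of an analytic arc map to the two half-branches of an analytic arc. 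Together these make ${\rm graph}(\psi)$ an $a$-invariant Euler cycle, and the Remark in Section 3 (closure in $\mathbb{P}^{n+m}(\R)$ of an $a$-invariant Euler cycle is a $\mathbb{Z}/(2)$ homological cycle) supplies the fundamental class --- no appeal to Theorem \ref{Ztwo_cycle} or to arc-symmetry is made. Note also that the fact that $\psi$ maps $C'(X,0)$ bi-Lipschitzly onto $C'(Y,0)$ is not the "comparatively formal" blow-up lifting you sketch: your argument tacitly assumes the lift of $h$ extends \emph{continuously} to the boundary of the strict transform (that $h$ is blow-spherical), which is not automatic, and the paper imports this step from Theorem 4.2 of \cite{Sampaio:2020c}. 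So your reduction and endgame are sound, but the bridge between them is missing; to repair it you need the Euler-cycle-plus-$a$-invariance argument (or an equivalent), not the algebraic and arc-symmetric machinery, whose hypotheses the tangent-cone data do not satisfy.
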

\begin{proof}

By Proposition \ref{multcone}, for any projection $p\colon \C^{n}\to \C^{d}$ such that $p^{-1}(0)\cap C(X_{\C},0)=\{0\}$, the degree of $C_X'$ with respect to $\pi'$, ${\rm deg}_{\pi'}(C_X')$, is well defined and ${\rm deg}_{\pi'}(C_X')\equiv m(X,0) \,{\rm mod\,} 2$, where $\pi=p|_{\R^n}\colon \R^n\to \R^d$ and $\pi'\colon\mathbb{S}^{n-1}\setminus \pi^{-1}(0)\to \mathbb{S}^{d-1}$ is given by $\pi'(u)=\frac{\pi(u)}{\|\pi(u)\|}$.

Since $C'(X,0)$ is the cone over $C_X'$, the degree of $C'(X,0)$ with respect to $\pi|_{\R^n}\colon \R^n \to \R^d$, ${\rm deg}_{\pi}(C'(X,0))$, is well defined and 
$$
{\rm deg}_{\pi}(C'(X,0)) \equiv {\rm deg}_{\pi'}(C_X')\equiv m(X,0) \,{\rm mod\,} 2.
$$

It follows from Proposition 2.2 in \cite{Valette:2010} that $C_X'$ is an Euler cycle.

\begin{claim}\label{a-invariant}
$C_X'$ is $a$-invariant.
\end{claim}
\begin{proof}[Proof of Claim \ref{a-invariant}]
Let $v\in C_X'\cap {\rm Smp}(\partial X')$. Take an orthogonal projection $\pi\colon \C^n\to \pi(\C^n)\cong\C^d$ such that $\pi(v)=v$ and $\pi^{-1}(0)\cap C(X_{\C},0)=\{0\}$. Thus, $C=\pi^{-1}((-\delta v,\delta v))\cap X$ is an analytic curve. Then, by Lemma 3.3 in \cite{Milnor:1968}, there are an open neighbourhood $U\subset \R^{n}$ of $0$ and $\Gamma_1,\ldots,\Gamma_r\subset\R^{n+1}$ such that $\Gamma_i\cap\Gamma_j=\{0\}$ if $i\not=j$ and  
$$C\cap U=\bigcup\limits _{i=1}^r\Gamma_i.$$ 
Moreover, for each $i\in\{1,\ldots,r\}$, there is an analytic homeomorphism $\gamma_i\colon(-\varepsilon ,\varepsilon )\to \Gamma_i$. 
% For each $i\in\{1,\ldots,r\}$, let $\beta_{2i-1},\beta_{2i}\colon [0,\varepsilon)\to \Gamma_i$ given by $\beta_{2i-1}(t)=\gamma_i(-t)$ and $\beta_{2i}(t)=\gamma_i(t)$.

Since $k_X(v)=\#\{i;\lim\limits_{t\to 0^-}\frac{\gamma_i(t)}{\|\gamma_i(t)\|}=-\lim\limits_{t\to 0^+}\frac{\gamma_i(t)}{\|\gamma_i(t)\|}\}+2\#\{j;\lim\limits_{t\to 0^-}\frac{\gamma_j(t)}{\|\gamma_j(t)\|}=\lim\limits_{t\to 0^+}\frac{\gamma_j(t)}{\|\gamma_j(t)\|}=v\}$ is an odd number, there are an odd number of $i'$s such that $\lim\limits_{t\to 0^-}\frac{\gamma_i(t)}{\|\gamma_i(t)\|}=-\lim\limits_{t\to 0^+}\frac{\gamma_i(t)}{\|\gamma_i(t)\|}$. Then $-v\in C(X,0)$. By the density of ${\rm Smp}(\partial X')$ in $C(X,0)\cap \mathbb{S}^{n-1}$, we can assume that $-v\in {\rm Smp}(\partial X')$. Since $k_X(-v)=\#\{i;\lim\limits_{t\to 0^-}\frac{\gamma_i(t)}{\|\gamma_i(t)\|}=-\lim\limits_{t\to 0^+}\frac{\gamma_i(t)}{\|\gamma_i(t)\|}\}+2\#\{j;\lim\limits_{t\to 0^-}\frac{\gamma_j(t)}{\|\gamma_j(t)\|}=\lim\limits_{t\to 0^+}\frac{\gamma_j(t)}{\|\gamma_j(t)\|}=-v\}$, we find that $-v\in C_X'$. Therefore, $C_X'$ is $a$-invariant.
\end{proof}

% Let $d_0h\colon C(X,0)\to C(Y,0)$ be the mapping given by $d_0h(v)=\lim\limits_{t\to 0^+}\frac{h(\gamma(t)}{t}$, for any subanalytic arc $\gamma\colon[0,\varepsilon)\to X$ such that $\gamma(t)=tv+o(t)$. 
Since $h$ is an arc-analytic bi-Lipschitz homeomorphism, it follows that $d_0h\colon C(X,0)\to C(Y,0)$ is a bi-Lipschitz homeomorphism.

By Theorem 4.2 in \cite{Sampaio:2020c},  $\psi=d_0h|_{C'(X,0)}\colon C'(X,0)\to C'(Y,0)$ is a  bi-Lipschitz homeomorphism and by the proof of Claim \ref{a-invariant}, $\psi$ satisfies $\psi(-v)=-\psi(v)$ whenever $v\in C'(X,0)$.  Thus, $\Gamma={\rm graph}(\psi)$ is an $a$-invariant Euler cycle.

Therefore, the closure $\overline{\Gamma}$ of $\Gamma$ in $\mathbb{P}^{n+m}(\R)$ is a $\mathbb{Z}/(2)$ homological cycle. 
% We finish as in Theorem \ref{inv_degree_homo_cycle}.

We set $G={\rm graph}(\psi^{-1})$. Similarly, the closure $\overline{G}$ of $G$ in $\mathbb{P}^{n+m}(\R)$ is also a $\mathbb{Z}/(2)$ homological cycle.

% \begin{claim}
% For a generic projection $\pi\colon \R^{n+m}\to \R^d$, $\#(\pi^{-1}(x)\cap \Gamma)\,{\rm mod}\, 2$ (resp. $\#(\pi^{-1}(x)\cap G)\,{\rm mod}\, 2$) is constant for a generic $x\in \R^d$. 
% \end{claim}

Since $\Gamma$ is a cone, for any projection $\tilde \pi\colon \R^{n+m}\to \R^d$ such that 
$\tilde \pi^{-1}(0)\cap \Gamma=\{0\}$, ${\rm deg}_{\pi}(\Gamma)$ is defined and equal to the topological degree mod 2 of $[\overline{\Gamma}]$. In fact, since $\tilde \pi^{-1}(0)\cap \Gamma=\{0\}$, for a generic $t\in \R^d$, we have $\tilde \pi^{-1}(t)\cap {\rm Sing}(\Gamma)=\emptyset$ and $\tilde \pi^{-1}(t)$ is transversal to $\Gamma\setminus {\rm Sing}(\Gamma)$, and thus $\#(\tilde \pi^{-1}(t)\cap \Gamma)\,{\rm mod}\, 2$ is equal to the topological degree mod 2 of $[\overline{\Gamma}]$. In particular, ${\rm deg}_{\pi}(\Gamma)$ does not depend on a generic projection $\tilde\pi\colon \R^{n+m}\to \R^d$, and thus ${\rm deg}_2(\Gamma)$ is defined. The same holds true for $[\overline{G}]$, i.e., ${\rm deg}_2(G)$ is defined and is equal to the topological degree mod 2 of $[\overline{G}]$. Since $G= \phi(\Gamma)$, we conclude that ${\rm deg}_2(\Gamma)\equiv {\rm deg}_2(G)\,{\rm mod}\, 2$, where $\phi \colon \R^n\times \R^m\to \R^m\times \R^n$ is given by $\phi(x,y)=(y,x)$.

\begin{claim}
$m(X,0)\equiv {\rm deg}_2(\Gamma)\,{\rm mod}\, 2$.
\end{claim}
\begin{proof}
Let $p\colon \C^{n}\to \C^{d}$ be a linear projection such that $p^{-1}(0)\cap C(X_{\C},0)=\{0\}$. Then, for $\pi=p|_{\R^n}\colon \R^n\to \R^d$, ${\rm deg}_{\pi}(C'(X,0))$ is defined and ${\rm deg}_{\pi}(C'(X,0))\equiv m(X,0) \,{\rm mod\,} 2$. So, it is enough to show that ${\rm deg}_{\pi}(C'(X,0))\equiv {\rm deg}_2(\Gamma) \,{\rm mod\,} 2$.

Let $\tilde \pi \colon \R^n\times \R^m\to \R^d$ be the linear projection given by $\tilde \pi(x,y)=\pi(x)$. Since $\Gamma=C(\Gamma,0)$, we have $\tilde \pi^{-1}(0)\cap \Gamma=\{0\}$. Indeed, if $v=(u,w)\in \tilde \pi^{-1}(0)\cap \Gamma$ then there exists an arc $\gamma\colon[0,\varepsilon)\to \Gamma$ such that $\gamma(t)=tv+o(t)$. Thus, there exists an arc $\beta\colon[0,\varepsilon)\to X$ such that $\gamma(t)=(\beta(t),\psi(\beta(t)))$ for all $t\in [0,\varepsilon)$ and, in particular,  $v=\lim\limits_{t\to 0^+}\frac{\gamma(t)}{t}=(\lim\limits_{t\to 0^+}\frac{\beta(t)}{t},\lim\limits_{t\to 0^+}\frac{\psi(\beta(t))}{t})$. We choose coordinates such that $\pi$ is the projection on the first $d$ coordinates. Thus, for $u=(u_1,u_2)$ and since  $v\in \tilde \pi^{-1}(0)$, we have $u_1=0$. This shows that $u\in \pi^{-1}(0)\cap C(X,0)$, and thus $u=0$. Since $\lim\limits_{t\to 0^+}\frac{\beta(t)}{t}=0$ and there exists a constant $C>0$ such that $\|\frac{\psi(\beta(t))}{t}\|\leq C\|\frac{\beta(t)}{t}\|$ for all $t\in (0,\varepsilon)$, we find that $v=0$.

Thus, ${\rm deg}_{\pi}(\Gamma)\equiv {\rm deg}_2(\Gamma)\,{\rm mod}\, 2$.

Moreover, for a generic $t\in \R^d$, the set $\tilde \pi^{-1}(t)\cap \Gamma=\{(x,y,z)\in \Gamma; x=t $ and $ z=\psi(x,y)\}$ is homeomorphic to $\{(x,y)\in C'(X,0); x=t\}$. Therefore, ${\rm deg}_{\pi}(C'(X,0))\equiv {\rm deg}_2(\Gamma)\,{\rm mod}\, 2$. 
\end{proof}
Similarly, $m(Y,0)\equiv {\rm deg}_2(G)\,{\rm mod}\, 2$ and since ${\rm deg}_2(\Gamma)\equiv {\rm deg}_2(G)\,{\rm mod}\, 2$, we finish the proof.
\end{proof}

\begin{remark}
Theorem \ref{proof_FKP-conjecture} proves even more than stated in Fukui-Kurdyka-Paunescu's Conjecture, since we do not require in Theorem \ref{proof_FKP-conjecture} that the sets $X$ and $Y$ have to be irreducible or that $h$ has to be defined on a neighbourhood of $0\in \R^n$.
\end{remark}

% \begin{proof}
% Let $\tilde X=Graph(h)$ and $\tilde Y=Graph(h^{-1})$. Just as before, $\tilde Y$ is a real analytic set.
% 
% Since $\tilde X$ and $\tilde Y$ real analytic isomorphic, then $m(\tilde X)=m(\tilde Y)$ (see Proposition 4.4 in \cite{Sampaio:2021b}). 
% 
% Let $\pi\colon \C^n\times \C^m\to \C^d$ be a linear projection such that 
% $$
% \pi^{-1}(0)\cap (C(\tilde X_{\C},0)\cup C((X\times \{0\})_{\C},0))=\{0\}.
% $$
% \end{proof}

\subsection{On the global version of Fukui-Kurdyka-Paunescu's Conjecture}

\begin{theorem}\label{thm:arc-symmetric}
Let $A\subset \R^n, B\subset \R^m$ be  real algebraic $d$-dimensional sets and let $h\colon A\to B$ be a semialgebraic and semi-bi-Lipschitz homeomorphism. If the graph of $h$ is arc-symmetric, then
${\rm deg}(A)\equiv {\rm deg}(B)\,{\rm mod\,} 2.$
\end{theorem}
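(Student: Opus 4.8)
The plan is to reduce Theorem \ref{thm:arc-symmetric} to Theorem \ref{Ztwo_cycle}. Since $A,B$ are real algebraic and $h$ is a semialgebraic semi-bi-Lipschitz homeomorphism, it suffices to show that the projective closure $\overline{\Gamma}\subset\mathbb{P}^{n+m}(\R)$ of $\Gamma:=\mathrm{graph}(h)$ is a $\mathbb{Z}/(2)$ homological cycle. First I would record that an arc-symmetric semialgebraic set is closed: if $p$ were a boundary point, the curve selection lemma would give a real-analytic arc into the set ending at $p$, and applying arc-symmetry to its reversed, analytically extended parametrization would force $p$ into the set. Hence $\overline{\Gamma}$ is a compact arc-symmetric semialgebraic set of dimension $d$ with $\dim(\overline{\Gamma}\setminus\Gamma)<d$.

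The heart of the argument is to prove that a compact $d$-dimensional arc-symmetric set $V$ is an Euler cycle, i.e. that along every $(d-1)$-dimensional stratum an even number of $d$-dimensional strata abut; this is exactly the condition making the sum of the top-dimensional simplices a mod $2$ cycle, so that $\overline{\Gamma}$ determines a class in $H_d(\overline{\Gamma};\mathbb{Z}/(2))$. To establish the even-abutting property, fix a generic point $p$ of a $(d-1)$-stratum $S$ and use a local real-analytic product trivialization $V\approx \R^{d-1}\times (C,0)$, so that the $d$-strata meeting $S$ at $p$ correspond bijectively to the half-branches of the $1$-dimensional transverse germ $(C,0)$. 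By Lemma 3.3 in \cite{Milnor:1968} (already used in the proof of Claim \ref{a-invariant}) each half-branch is the image of an analytic arc $\gamma\colon[0,\varepsilon)\to C$ with $\gamma(0)=0$, and $\gamma$ extends analytically to $(-\varepsilon,\varepsilon)$. Applying arc-symmetry to $s\mapsto\gamma(-s)$ shows that the backward continuation $\gamma((-\varepsilon,0))$ again lies in $C$, so each half-branch extends to a full two-sided analytic arc in $C$. The map sending a half-branch to the opposite end of its analytic continuation is a fixed-point-free involution on the finite set of half-branches—fixed-point-free because Milnor's parametrization is injective, so the two ends of each arc are disjoint—whence the number of half-branches, and therefore the number of $d$-strata abutting $S$, is even.

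This proves $\overline{\Gamma}$ is an Euler cycle; being compact, it is a $\mathbb{Z}/(2)$ homological cycle. Theorem \ref{Ztwo_cycle} then applies verbatim to $h\colon A\to B$ and yields ${\rm deg}(A)\equiv {\rm deg}(B)\pmod 2$.

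The step I expect to be the main obstacle is the passage to the transverse germ $(C,0)$: one must guarantee that a generic normal slice (or local trivialization) of the arc-symmetric set $V$ is itself arc-symmetric, so that the half-branch pairing argument is legitimate. This requires choosing the trivialization compatibly with analytic arcs (e.g. Nash or real-analytic along $S$) and checking that arc-symmetry is inherited by the slice; once this is in place, the analytic-continuation and involution bookkeeping are routine and parallel the half-branch count already carried out in Claim \ref{a-invariant}.
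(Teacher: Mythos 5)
Your reduction to Theorem \ref{Ztwo_cycle} is the right frame, but the step carrying all the difficulty is wrong. Under the definition of arc-symmetry used in this paper (Parusi\'nski's: $\gamma((-1,0))\subset E$ forces $\gamma((0,\epsilon))\subset E$ for some $\epsilon>0$), an arc-symmetric set need \emph{not} be closed: the condition never forces the single point $\gamma(0)$ into $E$, and indeed $\R^k\setminus\{0\}$ is arc-symmetric but not closed. Your curve-selection argument only reproves $\gamma((0,\epsilon))\subset\Gamma$ for the reversed arc and says nothing about $\gamma(0)$. The claim is even self-defeating: $\Gamma=\mathrm{graph}(h)$ is arc-symmetric in $\mathbb{P}^{n+m}(\R)$ by hypothesis, yet it is not closed there whenever $A$ is unbounded. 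Consequently you have no argument that $\overline{\Gamma}$ is arc-symmetric, and in general it is not: taking the projective closure adds the frontier $\overline{\Gamma}\setminus\Gamma$ at infinity, a semialgebraic set of dimension $\le d-1$ with no arc-symmetry properties a priori, and it is precisely along the $(d-1)$-strata at infinity that the mod $2$ cycle condition must be verified. Your half-branch pairing at a $(d-1)$-stratum uses arc-symmetry of the germ there, so it is unavailable exactly where it is needed; the obstacle you flag (arc-symmetry of the transverse slice) is secondary by comparison.

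The paper circumvents this point as follows: let $Z$ be the Zariski closure of $\overline{\Gamma}\setminus\Gamma$ in $\mathbb{P}^{n+m}(\R)$. Since algebraic sets are arc-symmetric and the arc-symmetric sets form a constructible category (\cite{PA}), in particular closed under finite unions, the set $\Gamma':=\Gamma\cup Z\supset\overline{\Gamma}$ is arc-symmetric, hence an Euler cycle by \cite{PA} (a result you would otherwise have to reprove via your slice argument; the paper simply cites it). Since $\dim Z\le d-1$, every $d$-dimensional cell of a semialgebraic triangulation of $\Gamma'$ compatible with $Z$ lies in $\Gamma\subset\overline{\Gamma}$; hence each $(d-1)$-cell of the induced stratification of $\overline{\Gamma}$ --- including the cells at infinity --- abuts an even number of $d$-cells, so $\overline{\Gamma}$ is a $\mathbb{Z}/(2)$ homological cycle, and Theorem \ref{Ztwo_cycle} concludes, as in your last step. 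To repair your write-up with minimal change, delete the false closedness lemma and replace ``$\overline{\Gamma}$ is arc-symmetric'' by this enlargement $\Gamma\cup Z$ together with the constructibility of the arc-symmetric category.
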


\begin{proof}

Let $\Gamma ={\rm graph}(h)$. Let $\overline{\Gamma}$ be the closure  of $\Gamma$ in $\mathbb{P}^{n+m}(\R)$. Let $Z$ be the Zariski closure of $\overline{\Gamma}\setminus \Gamma$ in $\mathbb{P}^{n+m}(\R).$ Since the arc-symmetric sets form a constructible category (\cite{PA}), we find that $\Gamma':=\Gamma\cup Z$ is arc-symmetric. 

 Take a semi-algebraic triangulation $\mathcal S$ of $\Gamma'$ such that the set $Z$ is a union of strata. Hence all $d$-dimensional cells of this stratification are contained in $\R^{n+m}.$ On $\overline{\Gamma}$ we have the induced stratification $\mathcal S'.$ Now every $(d-1)$-dimensional cell $C$ in $\mathcal S'$ comes from $\mathcal S.$ Since the set $\Gamma'$ is arc-symmetric it is an Euler cycle, see \cite{PA}. This means that $C$ meets an even number of $d-$dimensional cells. But every $d-$dimensional cell is contained in $\Gamma.$ Finally $C$ meets an even number of $d-$dimensional cells in $\mathcal S'.$ This means that $\overline{\Gamma}$ is a ${\mathbb{Z}}/{(2)}$ homological cycle. By Theorem \ref{Ztwo_cycle}, ${\rm deg}(A)\equiv {\rm deg}(B)\,{\rm mod\,} 2.$
\end{proof}

\begin{definition}
Let $X\subset \R^n$ and $Y\subset \R^m$ be analytic subsets and let $\overline{X}$ be the closure of $X$ in $\mathbb{P}^n(\R)$. We say that a mapping $f\colon X\to Y$ is {\bf arc-analytic at $\overline{X}$} if for any analytic arc $\gamma\colon (-1,1)\to \mathbb{P}^n(\R)$ such that $\gamma((-1,0)\cup (0,1))\subset X$, we have $f\circ \gamma|_{(-1,0)\cup (0,1)}$ extends to an
analytic arc $\tilde\gamma\colon (-1,1)\to \mathbb{P}^m(\R)$.
\end{definition}

\begin{corollary}\label{proof_global_FKP}
Let $A\subset \R^n, B\subset \R^m$ be real algebraic $d-$dimensional sets, let $\overline{A}$ be the closure of $A$ in $\mathbb{P}^n(\R)$ and let $h\colon A\to B$ be a semialgebraic and semi-bi-Lipschitz homeomorphism. Assume that $h$ is arc-analytic at $\overline{A}$. Then
${\rm deg}(A)\equiv {\rm deg}(B)\,{\rm mod\,} 2.$
\end{corollary}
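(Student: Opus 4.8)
The statement to prove is Corollary \ref{proof_global_FKP}: under the hypotheses that $h\colon A\to B$ is a semialgebraic semi-bi-Lipschitz homeomorphism that is arc-analytic at $\overline A$, we want $\mathrm{deg}(A)\equiv \mathrm{deg}(B)\ \mathrm{mod}\ 2$. The natural strategy is to reduce this to Theorem \ref{thm:arc-symmetric}, which already gives the conclusion under the hypothesis that $\mathrm{graph}(h)$ is arc-symmetric. So the whole task is to show that the arc-analyticity of $h$ at $\overline A$ forces the graph $\Gamma=\mathrm{graph}(h)\subset\R^{n+m}\subset\mathbb P^{n+m}(\R)$ to be an arc-symmetric subset of $\mathbb P^{n+m}(\R)$. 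Once that is in hand, the corollary is immediate.

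To carry out the reduction, first I would unwind the definition of arc-symmetric for $\Gamma$ inside $\mathbb P^{n+m}(\R)$: given an analytic arc $\gamma\colon(-1,1)\to\mathbb P^{n+m}(\R)$ with $\gamma((-1,0))\subset\Gamma$, I must produce $\epsilon>0$ with $\gamma((0,\epsilon))\subset\Gamma$. Let $p\colon\mathbb P^{n+m}(\R)\dashrightarrow\mathbb P^{n}(\R)$ be the projection onto the first factor and let $\alpha=p\circ\gamma$, an analytic arc (possibly after shrinking to avoid the indeterminacy locus, which a generic/analytic arc meets only discretely). On $(-1,0)$ we have $\alpha\subset A$ and $\gamma=(\alpha,h\circ\alpha)$ since $\gamma$ lands in the graph. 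The hypothesis ``arc-analytic at $\overline A$'' is precisely the tool to control the second coordinate: it says that $h\circ\alpha|_{(-1,0)}$ extends to an analytic arc $\tilde\gamma\colon(-1,1)\to\mathbb P^m(\R)$. By analytic continuation (identity theorem for analytic arcs) the extension must agree with the given $\gamma$'s second-coordinate component, so $\gamma(t)=(\alpha(t),\tilde\gamma(t))$ for all $t$ in a neighborhood of $0$. Then for small $t>0$, provided $\alpha(t)\in A$, the point $\gamma(t)=(\alpha(t),h(\alpha(t)))$ lies on $\Gamma$.

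The step that will need the most care is handling the behavior \emph{at} the origin of the arc and the passage across $t=0$, together with making sure $\alpha((0,\epsilon))$ actually stays in $A$ rather than escaping to $\overline A\setminus A$ or to the hyperplane at infinity. Here the semialgebraicity of $A$ (so $A$ is semialgebraically described and $\overline A\setminus A$ is lower-dimensional) and the fact that $\gamma$ is an honest analytic arc in projective space combine: an analytic arc either lies in a given algebraic set on a whole one-sided interval or meets it discretely, so after shrinking $\epsilon$ we may assume $\alpha((0,\epsilon))$ avoids the lower-dimensional bad locus and hence sits in $A$. I would also invoke that $\Gamma$, being the graph of a semialgebraic homeomorphism, is semialgebraic of pure dimension $d$, so its closure behaves well and the arc-symmetric notion applies in the constructible category as used in the proof of Theorem \ref{thm:arc-symmetric}.

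Having shown $\Gamma$ is arc-symmetric, I finish by citing Theorem \ref{thm:arc-symmetric} verbatim: its proof shows $\overline\Gamma$ is a $\mathbb Z/(2)$ homological cycle and then Theorem \ref{Ztwo_cycle} yields $\mathrm{deg}(A)\equiv\mathrm{deg}(B)\ \mathrm{mod}\ 2$. Thus the only real content is the implication \emph{arc-analytic at $\overline A$} $\Rightarrow$ \emph{graph is arc-symmetric}, and I expect the main obstacle to be the bookkeeping with the indeterminacy locus of the projection $p$ and with points of the arc that could, a priori, cross into the part of $\overline A$ at infinity; the semialgebraicity and the analytic-continuation/identity argument are what resolve it.
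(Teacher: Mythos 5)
Your proposal is correct and follows exactly the route the paper intends (the paper states Corollary \ref{proof_global_FKP} without proof, immediately after Theorem \ref{thm:arc-symmetric} and the definition of arc-analyticity at $\overline{A}$): project an analytic arc $\gamma$ with $\gamma((-1,0))\subset\mathrm{graph}(h)$ to an arc $\alpha$ in $\mathbb{P}^n(\R)$, use arc-analyticity at $\overline{A}$ plus the identity theorem to match the second coordinate $q\circ\gamma$ with the analytic extension $\tilde\gamma$ of $h\circ\alpha$, conclude $\mathrm{graph}(h)$ is arc-symmetric, and cite Theorem \ref{thm:arc-symmetric} (hence Theorem \ref{Ztwo_cycle}). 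One small repair: to guarantee $\alpha((0,\epsilon))\subset A$ you should invoke that $A$ is \emph{algebraic} (Zariski closed in $\R^n$), so that $\alpha$, being trapped in the projective Zariski closure of $A$ by the identity theorem and off the hyperplane at infinity except at discretely many parameters, actually lands in $A$ --- mere semialgebraicity of $A$, which is what you cite, would not suffice for this step.
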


\section{Invariance of the degree and multiplicity of complex analytic sets}\label{section:mainresults}
% \subsection{Invariance of the multiplicity}

\subsection{Invariance of the multiplicity}

The next result generalizes Theorem 4.1 in \cite{Jelonek:2021}.
\begin{theorem}\label{inv_mult}
Let $(X,0)\subset (\C^n,0), (Y,0)\subset (\C^m,0)$ be germs of complex analytic sets and let $h\colon (X,0)\to (Y,0)$ be a germ of homeomorphism which is also semi-bi-Lipschitz at $0$. Assume that the graph of $h$ is a complex analytic set. Then 
$m(X,0)= m(Y,0).$
\end{theorem}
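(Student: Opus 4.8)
The plan is to compute the common multiplicity through the graph $G=\mathrm{graph}(h)\subset\C^n\times\C^m$, which by hypothesis is a complex analytic germ. Write $d=\dim_\C X=\dim_\C Y$; since the restrictions to $G$ of the linear projections $p\colon\C^n\times\C^m\to\C^n$, $(x,y)\mapsto x$, and $q\colon\C^n\times\C^m\to\C^m$, $(x,y)\mapsto y$, are bijective (because $G$ is a graph), $G$ is itself a $d$-dimensional analytic germ. I would prove the two equalities $m(G,0)=m(X,0)$ and $m(G,0)=m(Y,0)$ separately; since $h$ is semi-Lipschitz at $0$ and $h^{-1}$ is semi-Lipschitz at $0$, the two cases are symmetric (interchange $X,h$ with $Y,h^{-1}$), so it suffices to establish the first.

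First I would fix a linear projection $\pi\colon\C^n\to\C^d$ computing $m(X,0)$, that is, with $\pi^{-1}(0)\cap C(X,0)=\{0\}$, and set $\tilde\pi\colon\C^{n+m}\to\C^d$, $\tilde\pi(x,y)=\pi(x)$. The central step is to verify the transversality $\tilde\pi^{-1}(0)\cap C(G,0)=\{0\}$, which guarantees that $\tilde\pi|_G$ is a finite branched cover near $0$ whose generic number of sheets is exactly $m(G,0)$. To see this, take $v=(u,w)\in\tilde\pi^{-1}(0)\cap C(G,0)$ together with an arc $\gamma(t)=(\beta(t),h(\beta(t)))$ in $G$ with $\gamma(t)/t\to v$. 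Then $u=\lim_{t\to0^+}\beta(t)/t\in C(X,0)$ and $\pi(u)=\tilde\pi(v)=0$ force $u=0$, while the semi-Lipschitz bound $\|h(\beta(t))\|=\|h(\beta(t))-h(0)\|\le C\|\beta(t)\|$ gives $\|w\|=\lim_{t\to0^+}\|h(\beta(t))\|/t\le C\lim_{t\to0^+}\|\beta(t)\|/t=C\|u\|=0$; hence $v=0$. This is the same tangent-cone computation already carried out in the claim inside the proof of Theorem \ref{proof_FKP-conjecture}.

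With transversality established, I would finish by a fibre count. Because $p|_G\colon G\to X$ is a bijection and $\tilde\pi=\pi\circ p$ on $G$, for every $t\in\C^d$ near $0$ the fibre $\tilde\pi^{-1}(t)\cap G=\{(x,h(x)):x\in X,\ \pi(x)=t\}$ is in natural bijection with $\pi^{-1}(t)\cap X$; hence the generic number of sheets of $\tilde\pi|_G$ equals that of $\pi|_X$, so $m(G,0)=m(X,0)$. Running the identical argument for $q|_G\colon G\to Y$, with a projection $\pi'\colon\C^m\to\C^d$ computing $m(Y,0)$ and the semi-Lipschitz constant of $h^{-1}$, yields $m(G,0)=m(Y,0)$, and therefore $m(X,0)=m(Y,0)$.

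I expect the transversality identity $\tilde\pi^{-1}(0)\cap C(G,0)=\{0\}$ to be the only substantive point: it is precisely here that the semi-bi-Lipschitz hypothesis, through the one-sided Lipschitz bounds at $0$, is used to pin down the tangent cone of the graph, and where the bi-Lipschitz assumption of Theorem 4.1 in \cite{Jelonek:2021} is relaxed to the semi-bi-Lipschitz one. Everything else reduces to the standard fact, already built into the definition of multiplicity recalled in Section \ref{section:preliminaries}, that a linear projection transverse to the tangent cone of a complex analytic germ realizes its multiplicity as the generic number of sheets.
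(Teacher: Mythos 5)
Your proof is correct, and its skeleton is the same as the paper's: pass to the graph $\Gamma=\mathrm{graph}(h)$, show that the two coordinate projections restricted to $\Gamma$ compute $m(\Gamma,0)$, and use bijectivity of those projections to conclude $m(X,0)=m(\Gamma,0)=m(Y,0)$. The one place you genuinely diverge is the justification of the key transversality. The paper routes it through Lemma \ref{lemat}: by Lemma \ref{wykres} the projections $\pi_{X'}\colon\Gamma\to X'$ and $\pi_{Y'}\colon\Gamma\to Y'$ are semi-bi-Lipschitz, hence the centers $S_1,S_2$ of the coordinate projections avoid the closure $\Sigma_0$ of the set of \emph{secant} directions of $\Gamma$ through $0$; since $\overline{C(\Gamma,0)}\setminus C(\Gamma,0)\subset\Sigma_0$, this gives $\overline{C(\Gamma,0)}\cap S_i=\emptyset$ and allows the paper to count $\#\bigl(\langle S_1,L\rangle\cap\Gamma\bigr)$ for a generic affine slice $L$. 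You instead verify only the weaker statement $\tilde\pi^{-1}(0)\cap C(G,0)=\{0\}$ by the direct arc estimate (the same computation as in the claim inside the proof of Theorem \ref{proof_FKP-conjecture}), which is more elementary, bypasses Lemma \ref{lemat} and the secant set entirely, and is indeed sufficient: for a complex analytic germ, kernel-avoids-tangent-cone is exactly the hypothesis under which a linear projection realizes the multiplicity as the generic sheet number, the fact recalled in Section \ref{section:preliminaries}. In both arguments each one-sided Lipschitz bound ($h$ for the $X$-side, $h^{-1}$ for the $Y$-side) is used exactly once, so the hypotheses are consumed identically. The only step you should tighten is the phrase ``in natural bijection'': the two generic fibre counts must be taken in matched neighborhoods, so one should note that the transversality makes $\tilde\pi|_G$ and $\pi|_X$ proper near $0$, choose adapted neighborhoods so that $x\mapsto(x,h(x))$ carries one system into the other (the paper glosses the same point with ``we can shrink $U,V$ if necessary''), and deduce equality of the two covering degrees by the resulting two-sided inequality; this is routine and does not affect the validity of your argument.
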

\begin{proof}
Let $U,V$ be small neighborhoods of $0$ in $\C^n$ and $\C^m$ such that the mapping $h:U\cap X=X'\to V\cap Y=Y'$ is defined and it is semi-bi-Lipschitz.
Denote by $\Gamma\subset U\times V$ the graph of $h.$ By Lemma \ref{wykres} the projections $\pi_{X'}:\Gamma\to X'$ and $\pi_{Y'}:\Gamma\to Y'$ are   semi-bi-Lipschitz homeomorphism. Let
$\pi_1\colon \C^n\times \C^m\to \C^n$ and $\pi_2\colon \C^n\times \C^m\to \C^m$ be projections and denote by $S_1,S_2\subset \pi_\infty=\mathbb{P}^{n+m-1}(\C)$ the centers of these projections. Denote by $\Lambda_0\subset \pi_\infty$ the set of directions of all secants of $\Gamma$ which contain $0$ and let $\Sigma_0= cl(\Lambda_0)$.
Since $\pi_1|_\Gamma=\pi_X$ and $\pi_2|_\Gamma=\pi_Y$ we see by Lemma \ref{lemat} that $\Sigma_0 \cap S_1=\Sigma\cap S_2=\emptyset.$ 
Let $C(\Gamma,0)$ denote the tangent cone of $\Gamma$ at $0$. We have  $\overline{C(\Gamma,0)}\setminus C(\Gamma,0)\subset \Sigma_0$ and consequently $\overline{C(\Gamma,0)}\cap S_i=\emptyset$ for $i=1,2.$ Now let $L\subset \C^n$ be a generic linear subspace of dimension $k={\rm codim } \ X.$ Then $\#(L\cap X')={\rm mult}_0 X$ and  $L$ has no common points with $C(X,0)$ at infinity (we can shrink $U,V$ if necessary!).
This implies that also $(\overline{C(\Gamma,0)}\setminus C(\Gamma,0))\cap \langle S_1,L\rangle =\emptyset$ and consequently $\#(\langle S_1,L\rangle \cap\ \Gamma)={\rm mult}_0\ \Gamma$, where $\langle S_1,L\rangle $ is the linear (projective) subspace spanned by $L$ and $S_1.$ However the mapping $\pi_{X'}$ is a bijection, hence $\#(\langle S_1,L\rangle \cap\ \Gamma)={\rm mult}_0\ X.$ In particular ${\rm mult}_0\ \Gamma={\rm mult}_0\ X.$ In the same way ${\rm mult}_0\ \Gamma={\rm mult}_0\ Y.$ Hence  ${\rm mult}_0\ X={\rm mult}_0 \ Y.$

\end{proof}

\subsection{Invariance of the degree}

\begin{theorem}\label{inv_degree_semi}
Let $X\subset \C^n,Y\subset \C^m$ be complex algebraic sets and let $h\colon X\to Y$ be a mapping. Assume that $h$ is semi-bi-Lipschitz at infinity and its graph is a complex algebraic set. Then
${\rm deg} (X)= {\rm deg }(Y).$
\end{theorem}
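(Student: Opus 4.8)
The plan is to run the proof of Theorem~\ref{inv_mult} at infinity, replacing the tangent cone at the origin by the tangent cone at infinity and the multiplicity by the degree. Set $\Gamma=\textup{graph}(h)\subset\C^n\times\C^m$, which is algebraic by hypothesis and has dimension $d=\dim X=\dim Y$ since $h$ is a homeomorphism. Let $\pi_1\colon\C^n\times\C^m\to\C^n$ and $\pi_2\colon\C^n\times\C^m\to\C^m$ be the coordinate projections, and denote by $S_1,S_2\subset\pi_\infty=\mathbb{P}^{n+m-1}(\C)$ their projective centers. By Lemma~\ref{wykres} (applied at infinity) the restrictions $\pi_1|_\Gamma\colon\Gamma\to X$ and $\pi_2|_\Gamma\colon\Gamma\to Y$ are bijective and semi-bi-Lipschitz at infinity.

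Next I would extract the directional information at infinity. Let $\Lambda_\infty$ be the set of directions of secants of $\Gamma$ escaping to infinity and $\Sigma_\infty=\overline{\Lambda_\infty}$, the analogue of $\Sigma_0$ in Lemma~\ref{lemat}. Since $\pi_1|_\Gamma=\pi_X$ and $\pi_2|_\Gamma=\pi_Y$ are semi-bi-Lipschitz at infinity, the version of Lemma~\ref{lemat} at infinity gives $\Sigma_\infty\cap S_1=\Sigma_\infty\cap S_2=\emptyset$. In particular $\overline{C_\infty(\Gamma)}\cap S_i=\emptyset$ for $i=1,2$, and the points of $\overline{\Gamma}$ at infinity, i.e.\ $\overline{\Gamma}\setminus\Gamma$, lie in $\Sigma_\infty$ (this is precisely the inclusion $\overline{\Gamma}\setminus\Gamma\subset\Sigma_0$ invoked in the remark following Theorem~\ref{Ztwo_cycle}).

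Then I would choose a generic affine linear subspace $L\subset\C^n$ of dimension $n-d$ with $\#(L\cap X)=\textup{deg}\,X$, chosen so that its projective closure $\overline{L}$ meets $\overline{X}$ in no point at infinity and so that $L$ omits the relevant compact set $K$ outside which $h$ is semi-bi-Lipschitz. The set $\pi_1^{-1}(L)=L\times\C^m$ has projective closure $\langle S_1,\overline{L}\rangle$, of dimension $n+m-d$, complementary to $\dim\overline{\Gamma}=d$ in $\mathbb{P}^{n+m}(\C)$. Because $\Sigma_\infty\cap S_1=\emptyset$ and $\overline{L}\cap\overline{X}$ has no point at infinity, the intersection $\langle S_1,\overline{L}\rangle\cap\overline{\Gamma}$ has no point at infinity either, so $\#(\langle S_1,\overline{L}\rangle\cap\overline{\Gamma})=\#((L\times\C^m)\cap\Gamma)=\textup{deg}\,\Gamma$. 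Since $\pi_1|_\Gamma$ is a bijection carrying $(L\times\C^m)\cap\Gamma$ onto $L\cap X$, this count equals $\textup{deg}\,X$, whence $\textup{deg}\,\Gamma=\textup{deg}\,X$. Running the identical argument with $\pi_2$, the center $S_2$, and a generic $L'\subset\C^m$ yields $\textup{deg}\,\Gamma=\textup{deg}\,Y$, and therefore $\textup{deg}\,X=\textup{deg}\,Y$.

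The main obstacle is the control of the intersection at infinity: one must guarantee that cutting $\overline{\Gamma}$ with $\langle S_1,\overline{L}\rangle$ produces no spurious points on $\pi_\infty$, so that the affine count faithfully computes $\textup{deg}\,\Gamma$. This is exactly where the semi-bi-Lipschitz-at-infinity hypothesis is used, through Lemma~\ref{lemat}, to force $\Sigma_\infty\cap S_i=\emptyset$; combined with the genericity of $L$ (avoiding the finitely many directions of $C_\infty(X)$ and the compact set $K$), it confines the intersection to the affine chart. A secondary technical point, noted in the remark after Theorem~\ref{Ztwo_cycle}, is that $h$ is only assumed semi-bi-Lipschitz at infinity, so $L$ and $L'$ must be selected to miss the exceptional compact sets $K$ and $K'$; both requirements are met by a sufficiently general choice.
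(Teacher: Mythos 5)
Your argument is correct, but it is not the route the paper takes for this statement: the paper's entire proof is a one-line reduction to Theorem~\ref{inv_degree_homo_cycle}, using that the projective closure of a complex algebraic graph is automatically an orientable homological cycle (together with the remark after Theorem~\ref{Ztwo_cycle} to handle the ``at infinity'' weakening of the Lipschitz hypothesis). You instead transplant the proof of Theorem~\ref{inv_mult} to infinity: Lemma~\ref{lemat} applied to the projections $\pi_1|_\Gamma$, $\pi_2|_\Gamma$ forces the secant-direction sets at infinity to avoid the centers $S_1,S_2$, hence $\overline{\Gamma}\setminus\Gamma\subset\Sigma_\infty$ is disjoint from $S_i$, and a generic slice $\langle S_1,\overline{L}\rangle$ then meets $\overline{\Gamma}$ only in the affine chart, so the affine count gives ${\rm deg}\,\Gamma={\rm deg}\,X$ and symmetrically ${\rm deg}\,\Gamma={\rm deg}\,Y$. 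This is essentially the method of \cite{Jelonek:2021} and it buys self-containedness: no fundamental classes, orientations, or intersection pairings are needed, only the finiteness of the linear projection from a center disjoint from $\overline{\Gamma}$. What the paper's route buys is generality: Theorem~\ref{inv_degree_homo_cycle} covers graphs that are merely semialgebraic with homological-cycle closure, and this algebraic case drops out as a corollary. Two small points to tighten in your write-up: the directions at infinity of $X$ form a $(d-1)$-dimensional projective set $\overline{X}\cap\pi_\infty$, not ``finitely many directions'' (genericity of $\overline{L}$ still works by the dimension count $(d-1)+(n-d-1)<n-1$); and the step $\#(\langle S_1,\overline{L}\rangle\cap\overline{\Gamma})={\rm deg}\,\Gamma$ tacitly needs each intersection point to count with multiplicity one, which is cleanest to justify via the degree formula for the finite projection $p\colon\overline{\Gamma}\to\overline{X}$ with center $S_1$ disjoint from $\overline{\Gamma}$: since $\pi_1|_\Gamma$ is a bijection, $p$ is generically one-to-one, so ${\rm deg}\,\overline{\Gamma}={\rm deg}(p)\cdot{\rm deg}\,\overline{X}={\rm deg}\,\overline{X}$ --- the same level of rigor at which the paper's proof of Theorem~\ref{inv_mult} operates. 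Your attention to choosing $L'$ so that $L'\cap Y$ avoids $K'$ is well placed, since $h$ is only guaranteed bijective outside the compact sets.
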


\begin{proof}
This follows directly from Theorem \ref{inv_degree_homo_cycle}.
\end{proof}

% \begin{remark}
% In Theorem \ref{inv_degree_semi}, if $h$ is bi-Lipschitz at infinity, we only have to ask that its graph is a complex analytic set (see \cite[Theorem 3.1]{Sampaio:2021}).
% \end{remark}

\begin{theorem}\label{inv_degree_derivative}
Let $X\subset \C^n,Y\subset \C^m$ be complex algebraic sets and let $h\colon X\to Y$ be a semialgebraic and semi-bi-Lipschitz homeomorphism. Assume that $d_{\infty}h$ is $\C$-homogeneous. Then
${\rm deg} (X)= {\rm deg } (Y).$
\end{theorem}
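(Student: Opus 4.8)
The plan is to reduce Theorem \ref{inv_degree_derivative} to Theorem \ref{inv_degree_homo_cycle} by showing that the hypothesis ``$d_\infty h$ is $\C$-homogeneous'' forces the projective closure $G=\overline{\Gamma}$ of $\Gamma=\mathrm{graph}(h)$ in $\mathbb{P}^{n+m}(\C)$ to be an orientable homological cycle. Since $h$ is semialgebraic and semi-bi-Lipschitz at some point (hence, by the Remark following Theorem \ref{Ztwo_cycle}, it suffices to control the behaviour near infinity), the essential point is that the part of $G$ lying on the hyperplane at infinity $\pi_\infty=\mathbb{P}^{n+m-1}(\C)$ is governed by the tangent cone of $\Gamma$ at infinity, and this tangent cone is determined by $d_\infty h$.

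First I would set $d=\dim X=\dim Y=\dim\Gamma$ and analyze $G\setminus\Gamma\subset\pi_\infty$. By the definition of the pseudo-derivative at infinity, a direction $v=(u,w)\in C_\infty(\Gamma)$ arises from an arc $\gamma(t)=(\beta(t),h(\beta(t)))$ in $\Gamma$ with $\gamma(t)/t\to v$, so that $w=d_\infty h(u)$; thus $C_\infty(\Gamma)=\mathrm{graph}(d_\infty h)$ is exactly the graph of the pseudo-derivative over $C_\infty(X)$. The $\C$-homogeneity of $d_\infty h$ means $d_\infty h(\lambda u)=\lambda\, d_\infty h(u)$ for $\lambda\in\C$, so $C_\infty(\Gamma)$ is a \emph{complex} cone of complex dimension $d$ (the pseudo-derivative is an $\R$-linear, indeed $\C$-linear up to the cone structure, identification of $C_\infty(X)$ with $C_\infty(Y)$). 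This is the crucial gain over the purely real situation: the projectivization of $C_\infty(\Gamma)$ is then a complex projective subvariety of $\pi_\infty$, and in particular $G\setminus\Gamma$ sits inside a complex algebraic set of complex dimension $d-1$, so it is a set of real dimension at most $2d-2$, i.e. of real codimension at least $2$ in $G$.

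Granting this, the orientability argument runs as follows. The affine part $\Gamma$ is a smooth-at-a-generic-point real semialgebraic set of real dimension $2d$ carrying a natural orientation inherited from the complex structure of $X$ via the biholomorphic-type projection $\pi_{X}\colon\Gamma\to X$ (which is a semi-bi-Lipschitz homeomorphism by Lemma \ref{wykres}); indeed, $\Gamma$ is a complex-analytically parametrized set off a lower-dimensional locus, hence orientable. Because $G\setminus\Gamma$ has real codimension at least $2$ in $G$, it cannot obstruct the existence of a fundamental class: the same exact-sequence diagram chase used in the proof of Theorem \ref{inv_degree_homo_cycle}, with $A=\overline X\setminus X$ and $B=G\setminus\Gamma$, shows $H_{2d}(G,\Z)$ surjects onto the fundamental class of $\overline X$, so $[G]\neq 0$ in $H_{2d}(G,\Z)$. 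Thus $G$ is an orientable homological cycle, and Theorem \ref{inv_degree_homo_cycle} yields $\mathrm{deg}(X)=\mathrm{deg}(Y)$.

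The step I expect to be the main obstacle is making rigorous the claim that $\C$-homogeneity of $d_\infty h$ forces $C_\infty(\Gamma)$ to be a genuine complex cone of the expected dimension, together with the transfer of a complex orientation to $G$. The pseudo-derivative $d_\infty h$ is \emph{a priori} only a semialgebraic, Lipschitz map between real cones, so one must argue that $\C$-homogeneity (invariance under complex scaling) upgrades $\mathrm{graph}(d_\infty h)$ from a real to a complex cone, and then that this endows $G$ with a coherent orientation compatible with that of $\overline X$ under the projection $p\colon G\to\overline X$. Once the orientability and the codimension estimate on $G\setminus\Gamma$ are established, the remainder is a direct citation of Theorem \ref{inv_degree_homo_cycle}; the reduction itself is where the care is needed.
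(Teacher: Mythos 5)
Your proposal is correct in outline but takes a genuinely different route from the paper. The paper spends the hypothesis in one step: for a path $a(t)=wt+o_{\infty}(t)$ the corresponding graph point tends to $[(w,d_{\infty}h(w))]\in\mathbb{P}^{n+m}(\C)$, and $\C$-homogeneity makes this limit depend only on the projective class $[w]$ (arcs in directions $w$ and $\lambda w$, $\lambda\in\C^{*}$, hit the same point at infinity of $G$); hence $h$ extends to a homeomorphism $\bar h\colon \overline{X}\to G$, and since $\overline X$ is a projective variety --- hence an orientable homological cycle, a purely topological structure --- $G$ is one as well, and Theorem \ref{inv_degree_homo_cycle} finishes. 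You instead build the cycle on $G$ directly: orient the affine part $\Gamma$ and bound $\dim_{\R}(G\setminus\Gamma)\le 2d-2$, then run the exact-sequence chase to get $[\alpha]\ne 0$. What your route buys is an explicit identification of the geometric content (smallness of $G$ at infinity); what the paper's buys is brevity, since the extension $\bar h$ makes all cycle-theoretic verifications automatic.

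Two of your supporting claims are, however, overstated and must be weakened to what is true and actually sufficient. First, ${\rm graph}(d_{\infty}h)$ is in general only a \emph{real} semialgebraic cone of real dimension $2d$; since $h$ is merely semialgebraic there is no reason it is complex analytic, so ``the projectivization is a complex projective subvariety'' is unjustified (and likely false). What $\C$-homogeneity really gives is $\C^{*}$-invariance of $C_{\infty}(\Gamma)={\rm graph}\bigl(d_{\infty}h|_{C_{\infty}(X)}\bigr)$: the link of this cone is $S^{1}$-invariant, the Hopf circles are one-dimensional fibers, so the image in $\mathbb{P}^{n+m-1}(\C)$ has real dimension $\le 2d-2$, and $G\setminus\Gamma$ lies in that image. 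This is exactly your codimension-two bound, obtained without any complex-analyticity; note also that the semi-bi-Lipschitz bounds give $\|h(x)\|\asymp\|x\|$, which rules out asymptotic directions of the form $(0,w)$ or $(u,0)$, and that the identification $C_{\infty}(\Gamma)={\rm graph}(d_{\infty}h)$ uses semialgebraic curve selection at infinity. Second, $\Gamma$ is not ``complex-analytically parametrized'' --- but this is harmless: orientability of a topological $2d$-manifold is a homeomorphism invariant, so you may pull back the complex orientation of $X_{\rm reg}$ through $p|_{\Gamma}$ (Lemma \ref{wykres} is not even needed for this, only that $p|_\Gamma$ is a homeomorphism). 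You should also record that the non-manifold locus $p^{-1}({\rm Sing}\,X)\subset\Gamma$ has dimension $\le 2d-2$, so that in a compatible semialgebraic triangulation of $G$ every $(2d-1)$-simplex lies in the oriented manifold part and meets exactly two coherently oriented $2d$-simplices; then $\alpha$ is a cycle, and your chase (mapping $\alpha$ to the generator of $H_{2d}(\overline{X},A)\cong\Z$) gives $[\alpha]\ne 0$ --- phrase it this way rather than ``$[G]\neq 0$,'' which presupposes the fundamental class whose existence you are proving. With these repairs your argument closes and is a legitimate alternative to the paper's proof.
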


\begin{proof}
We can extend the mapping $h$ to the infinity -- we simply take a path $a(t)=wt+o_{\infty}(t)$ which tends to the point $[w]\in \mathbb{P}^{n}(\C)$ and by semilinearity the limit $\lim\limits_{t\to \infty} f(a(t))$ does not depend on $w$ but only on $[w]$. Hence we have an induced
homeomorphism $\bar h\colon \overline X\to G$, where $\overline X$ is the closure of $X$ in $\mathbb{P}^{n}(\C)$ and $G$ is the closure of ${\rm graph}(h)$ in $\mathbb{P}^{n+m}(\C)$. Hence $G$ is an orientable homological cycle. Then the conclusion follows from Theorem \ref{inv_degree_homo_cycle}.
\end{proof}

\subsection{Fukui-Kurdyka-Paunescu's Conjecture in the complex case}
The next result is a direct consequence of Theorem \ref{proof_FKP-conjecture}, but we present a proof  which is a little easier.
\begin{proposition}\label{inv_mult_homo_cycle_mod_two}
Let $X\subset \C^n,Y\subset \C^m$ be complex analytic sets and let $h\colon (X,0)\to (Y,0)$ be a subanalytic arc-analytic bi-Lipschitz homeomorphism. Then
$m(X,0)\equiv m(Y,0)\, {\rm mod}\, 2$.
\end{proposition}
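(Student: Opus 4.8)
The plan is to deduce the statement from Theorem~\ref{proof_FKP-conjecture}. Under the identification $\C^n\cong\R^{2n}$, a complex analytic set $X\subset\C^n$ of complex dimension $d$ becomes a real analytic subset of $\R^{2n}$ of real dimension $2d$, and similarly for $Y$; the map $h$ is then precisely a subanalytic, arc-analytic, bi-Lipschitz homeomorphism between the real analytic germs $(X,0)$ and $(Y,0)$. Hence Theorem~\ref{proof_FKP-conjecture} applies and gives $m_{\R}(X,0)\equiv m_{\R}(Y,0)\ {\rm mod}\ 2$, where $m_{\R}(\,\cdot\,,0)$ denotes the real multiplicity of Section~\ref{section:preliminaries}, namely the multiplicity of the complexification of the real ideal.

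First I would then compare this real multiplicity with the complex multiplicity $m(\,\cdot\,,0)$ of the statement. The claim I would isolate is that for every complex analytic germ $(Z,0)$
\[
m_{\R}(Z,0)=m(Z,0)^2,
\]
so that in particular $m_{\R}(Z,0)\equiv m(Z,0)\ {\rm mod}\ 2$. Granting this for $X$ and $Y$, the congruence $m_{\R}(X,0)\equiv m_{\R}(Y,0)$ produced above turns directly into $m(X,0)\equiv m(Y,0)\ {\rm mod}\ 2$, which is the assertion. To prove the displayed identity I would pass to the tangent cone: choosing holomorphic generators for $Z$ and writing their real and imaginary parts as real generators, the complexification $Z_{\C}$ is cut out by the complexified forms and its tangent cone at $0$ by their leading parts; a generator of order $k$ contributes a real and an imaginary leading form, each of order $k$, and I expect these to meet in the prescribed dimension and thus to multiply the degree by $k^2$. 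After a generic complex linear projection $\C^n\to\C^{d+1}$, which preserves the complex multiplicity, this reduces to the hypersurface case, where the factor $k^2=m(Z,0)^2$ is transparent.

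The step I expect to be the main obstacle is exactly this tangent-cone computation: one must check that the leading forms of the real and imaginary parts define a set of the expected dimension (equivalently, that the complexification produces no spurious tangent directions) and that the generic projection is transversal simultaneously for the real and complex data, which is where reducedness of $Z$ must be used with care. If one prefers to avoid the reduction to Theorem~\ref{proof_FKP-conjecture} and give the promised slightly easier direct argument, one can work entirely with the complex tangent cones $C(X,0)$ and $C(Y,0)$: these are complex algebraic cones, hence automatically $\C^{*}$-invariant — so $a$-invariant — and Euler cycles, whence the odd-part bookkeeping of Theorem~\ref{proof_FKP-conjecture} collapses to $C_X'=C(X,0)\cap\mathbb{S}^{2n-1}$. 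The pseudo-derivative restricts to a bi-Lipschitz homeomorphism $d_0h\colon C(X,0)\to C(Y,0)$, and arc-analyticity forces $d_0h(-v)=-d_0h(v)$ (reverse the analytic arc $t\mapsto\gamma(-t)$ and compare leading terms), so that $\Gamma={\rm graph}(d_0h)$ is an $a$-invariant Euler cycle and its projective closure is a $\mathbb{Z}/(2)$ homological cycle. Running the degree argument of Theorem~\ref{Ztwo_cycle} on the cones then yields the congruence, the passage from the real to the complex degree of a complex cone again supplying the square that is invisible mod $2$.
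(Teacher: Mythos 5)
Your main route is correct and is genuinely different from the proof the paper writes out: you reduce directly to Theorem~\ref{proof_FKP-conjecture} by viewing $X$ and $Y$ as real analytic germs, and then compare real and complex multiplicities via the identity $m_{\R}(Z,0)=m(Z,0)^2$. The paper itself remarks that the proposition is ``a direct consequence of Theorem~\ref{proof_FKP-conjecture}'' but chooses not to take that path; instead it works at the level of tangent cones, decomposing $C(X,0)$ into irreducible components $A_1,\dots,A_r$, showing each graph ${\rm graph}(d_0h|_{A_i})$ is an $a$-invariant Euler cycle, applying Theorem~\ref{Ztwo_cycle} componentwise to get ${\rm deg}(A_i)\equiv{\rm deg}(B_i)\bmod 2$, and then assembling the multiplicities through $m(X,0)=\sum_i k_X(A_i)\,m(A_i,0)$ together with the sheet-number invariance $k_X(A_i)=k_Y(B_i)$ from Proposition~1.6 of \cite{FernandesS:2016}. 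Your reduction buys brevity but puts all the weight on the lemma $m_{\R}(Z,0)=m(Z,0)^2$; there your leading-form computation is the weak link, and you can replace it by the standard and cleaner argument: the complexification of the real analytic set underlying $Z$ is, as a germ, isomorphic to $Z\times\overline{Z}\subset\C^n\times\C^n$ (in coordinates where $z$ and $\bar z$ are complexified independently, the ideal generated by ${\rm Re}\,f,{\rm Im}\,f$ complexifies to the one generated by $f(z)$ and $\bar f(w)$), and multiplicity is multiplicative on products since $C(Z_1\times Z_2,0)=C(Z_1,0)\times C(Z_2,0)$ and the degree of a join of projectivized cones in disjoint linear subspaces is the product of the degrees. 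With that lemma in hand, your first paragraph is a complete proof.

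Your alternative ``direct'' sketch, however, contains a genuine error: the claim that ``the odd-part bookkeeping collapses to $C_X'=C(X,0)\cap\mathbb{S}^{2n-1}$'' is false. The function $k_X$ counts local sheets of the blow-up of $X$ over its tangent cone, and for a complex germ this can be even: for $X=\{y^2=x^3\}\subset\C^2$ the reduced tangent cone is the line $\{y=0\}$ with $k_X=2$, so $C_X'$ is \emph{empty}, while $C(X,0)\cap\mathbb{S}^{3}$ is a circle. For the same reason, running the degree argument of Theorem~\ref{Ztwo_cycle} on the reduced cones only yields ${\rm deg}\,C(X,0)\equiv{\rm deg}\,C(Y,0)\bmod 2$, which is \emph{not} the desired congruence: in the cusp example $m(X,0)=2$ but ${\rm deg}\,C(X,0)=1$. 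This is exactly why the paper's proof cannot avoid the weighted formula $m(X,0)=\sum_i k_X(A_i)\,m(A_i,0)$ and the invariance $k_X(A_i)=k_Y(B_i)$. So: keep your first route (with the product-complexification proof of the square identity), and discard the collapsed-odd-part shortcut.
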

\begin{proof}
Let $\psi=d_0h\colon C(X,0)\to C(Y,0)$ be the pseudo-derivative of $h$ at $0$. Let $A_1,\ldots,A_r$ be the irreducible components of $C(X,0)$. Thus, $B_1=\psi(A_1),\ldots,B_r=\psi(A_r)$ are the irreducible components of $C(Y,0)$.

% \begin{claim}
Since $h$ is arc-analytic, $\Gamma_i={\rm graph}(\psi_i)$ is an $a$-invariant Euler cycle, where $\psi_i=\psi|_{A_i}$.
% \end{claim}
Therefore, the closure $\overline{\Gamma}_i$ of $\Gamma_i$ in $\mathbb{P}^{2(n+m)}(\R)$ is a $\mathbb{Z}/(2)$ homological cycle. By Theorem \ref{Ztwo_cycle}, ${\rm deg} (A_i)= {\rm deg } (B_i) \ {\rm mod} \ 2$. Since ${\rm deg} (A_i)=m(A_i,0)$ and ${\rm deg} (B_i)=m(B_i,0)$, we obtain
$m(A_i,0)\equiv m(B_i,0)\, {\rm mod}\, 2$, for each $i\in \{1,\ldots,r\}$. Moreover, $m(X,0)=\sum\limits_{i=1}^rk_X(A_i)m(A_i,0)$ and $m(Y,0)=\sum\limits_{i=1}^rk_Y(B_i)m(B_i,0)$. By Proposition 1.6 in \cite{FernandesS:2016}, $k_X(A_i)=k_Y(B_i)$ for each $i\in \{1,\ldots,r\}$. Therefore, 
$m(X,0)\equiv m(Y,0)\, {\rm mod}\, 2$.
\end{proof}

\bigskip

\noindent{\bf Acknowledgements}. The authors are grateful to Professor Wojciech Kucharz from Jagiellonian University at Krak\'ow for helpful conversations.

\end{document}